\newtheorem{Thm}{Theorem}[section]
\newtheorem{problem}[Thm]{Problem}
\newtheorem{lemma}[Thm]{Lemma}
\newtheorem{proposition}[Thm]{Proposition}
\newtheorem{definition}[Thm]{Definition}
\newtheorem{remark}[Thm]{Remark}
\newtheorem{theorem}[Thm]{Theorem}
\newtheorem*{kadison-singer}{Kadison-Singer Problem}
\newtheorem*{paving conjecture}{Paving Conjecture}
\newtheorem*{bourgain-tzafriri}{Bourgain-Tzafriri Conjecture}
\newtheorem*{feichtinger conjecture}{Feichtinger Conjecture}
\def\ldots{\mathinner{\ldotp\ldotp\ldotp}}
\def\ldots{\mathinner{\cdotp\cdotp\cdotp}}
\def \H{{\mathbb H}}
\def \Z{{\mathbb Z}}
\def \N{{\mathbb N}}
\def \DD{{\mathbb D}}
\def \CC{{\mathbb C}}
\def \B(l_2){B(\ell_2)}
\def \K{{\mathbb K}}
\def \cal{\mathcal}
\def \beq{\begin{eqnarray*}}
\def \eeq{\end{eqnarray*}}
\def \< {\langle}
\def \> {\rangle}
\def \R {\mathbb{R}}
\def \<{\langle}
\def \>{\rangle}
\begin{document}

\title[The Solution to the Kadison-Singer Problem:  Consequences]{Consequences
of the Marcus/Spielman/Srivastava Solution of the Kadison-Singer Problem}
\author[P.G. Casazza and J.C. Tremain
 ]{Peter G. Casazza and Janet C. Tremain}

\address{Department of Mathematics \\
University of Missouri-Columbia \\
Columbia, MO 65211}
\email{casazzap@missouri.edu; tremainjc@missouri.edu}

\thanks{The authors were supported by NSF DMS 1307685; NSF ATD 1042701 and 1321779; AFOSR  DGE51:  FA9550-11-1-0245.}

\subjclass{Primary: 42A05,42A10,42A16,43A50,46B03,\\46B07,46L05,
46L30}

\begin{abstract}
It is known that the famous, intractible 1959 
Kadison-Singer problem in $C^{*}$-algebras
is equivalent to fundamental unsolved problems in a dozen areas
of research in pure mathematics, applied mathematics and Engineering.  The recent surprising solution to this
problem by Marcus, Spielman and Srivastava was a significant achievement and a significant advance for all these areas of research.
  We will look at many of the known
equivalent forms of the Kadison-Singer Problem and
see what are the best new theorems available in each
area of research as a consequence of the work of
Marcus, Spielman and Srivastave.  In the cases 
where {\it constants}
are important for the theorem, we will give the best
constants available in terms of a {\it generic constant}
taken from \cite{MSS}.  Thus, if better constants eventually become available, it will be simple to
adapt these new constants to the theorems.  
\end{abstract}

\maketitle

\section{Introduction}\label{Intro}
\setcounter{equation}{0}

The famous 1959 Kadison-Singer Problem \cite{KS} has defied the best efforts
of some of the most talented mathematicians of 
the last 50 years.  The recent solution to this
problem by Marcus, Spielman and Srivastave \cite{MSS} is 
not only a significant mathematical achievement by
three very talented mathematicians, but it is also a
major advance for a dozen different areas of
research in pure mathematics, applied mathematics
and engineering.

\begin{kadison-singer}[KS]
Does every pure state on the (abelian) von Neumann algebra $\DD$ of
bounded diagonal operators on ${\ell}_2$ have a unique extension to
a (pure) state on $B({\ell}_2)$, the von Neumann algebra of all
bounded linear operators on the Hilbert space ${\ell}_2$?
\end{kadison-singer}

A {\bf state} of a von Neumann algebra ${\cal R}$ is a linear
functional $f$ on ${\cal R}$ for which $f(I) = 1$ and $f(T)\ge 0$
whenever $T\ge 0$ (whenever $T$ is a positive operator).
The set of states of ${\cal R}$ is a convex subset of the dual space
of ${\cal R}$ which is compact in the ${\omega}^{*}$-topology.  By the
Krein-Milman theorem, this convex set is the closed convex hull of its
extreme points.  The extremal elements in the space of states are
called the {\bf pure states} (of ${\cal R}$).

This problem arose from the very productive
collaboration of Kadison and Singer in
the 1950's which culminated in their seminal work on triangular
operator algebras.  During this collaboration, they often discussed
the fundamental work of Dirac \cite{Di} on Quantum Mechanics.  
In particular, they kept returning to one part of Dirac's work because it
seemed to be problematic.   Dirac wanted to find a
``representation'' (an orthonormal basis) for a compatible
family of observables (a commutative family of self-adjoint
operators).  On pages 74--75 of \cite{Di} Dirac states:

\vskip8pt
\hskip.5truein\vbox{\hsize4truein
``To introduce a representation in practice
\vskip8pt
(i)  \hskip4pt We look for observables which we would like to have diagonal
either because we are interested in their probabilities or for
reasons of mathematical simplicity;

(ii)  \hskip4pt We must see that they all commute --- a necessary condition
since diagonal matrices always commute;

(iii)  \hskip4pt We then see that they form a complete commuting set, and if
not we add some more commuting observables to make them into a complete
commuting set;

(iv)  \hskip4pt We set up an orthogonal representation with this commuting set
diagonal.
\vskip8pt

{\bf The representation is then completely determined} ...
 {\bf by the observables
that are diagonal} ...''}
\vskip6pt
The emphasis above was added.  Dirac then talks about finding a basis that
diagonalizes a self-adjoint operator, which is troublesome since there
are perfectly respectable self-adjoint operators which do not have
a single eigenvector.  Still, there is a {\it spectral resolution}
of such operators.  Dirac addresses this problem on pages 57-58
of \cite{Di}:
\vskip8pt
\hskip.5truein\vbox{\hsize4truein
``We have not yet considered the lengths of the basic vectors.  With an
orthonormal representation, the natural thing to do is to normalize
the basic vectors, rather than leave their lengths arbitrary, and
so introduce a further stage of simplification into the representation.
However, it is possible to normalize them only if the parameters are
continuous variables that can take on all values in a range, the basic
vectors are eigenvectors of some observable belonging to eigenvalues
in a range and are of infinite length...''}
\vskip8pt

In the case of $\DD$, the representation is
$\{e_i\}_{i\in I}$, the orthonormal basis of $l_2$.  But what happens if our
observables have ``ranges" (intervals) in their spectra?  This led Dirac to
introduce his famous $\delta$-function --- vectors of ``infinite length."
From a mathematical point of view, this is problematic.  What we need is to
replace the vectors $e_i$ by some mathematical object that is essentially the
same as the vector, when there is one, but gives us something precise and
usable when there is only a $\delta$-function.  This leads to the ``pure
states'' of $\B(l_2)$ and, in particular, the (vector) pure states $\omega_x$,
given by $\omega_x(T)=\<Tx,x\>$, where $x$ is a unit vector in $\H$.  Then,
$\omega_x(T)$ is the expectation value of $T$ in the state corresponding to
$x$.  This expectation is the average of values measured in the laboratory
for the ``observable" $T$ with the system in the state corresponding to $x$.
The pure state $\omega_{e_i}$ can be shown to be completely determined by its
values on $\DD$; that is, each $\omega_{e_i}$ has a {\it unique\/} extension
to $\B(l_2)$.  But there are many other  pure states of $\DD$.  (The family of
all pure states of $\DD$ with the $w^*$-topology is $\beta(\Z)$, the
$\beta$-compactification of the integers.)  Do these other pure states have
unique extensions?  This is the Kadison-Singer problem (KS).

By a ``complete" commuting set, Dirac means what is now called a ``maximal
abelian self-adjoint" subalgebra of $\B(l_2)$; $\DD$ is one such.  There are
others.  For example, another is generated by an observable whose``simple"
spectrum is a closed interval.  Dirac's claim, in mathematical form, is that
each pure state of a ``complete commuting set" has a unique state extension
to $\B(l_2)$.  Kadison and Singer show [37] that that is {\it not so\/} for
each complete commuting set other than $\DD$.  They also show that each pure
state of $\DD$ has a unique extension to the uniform closure of the algebra
of linear combinations of operators $T_\pi$ defined by $T_\pi e_i=e_{\pi(i)}$,
where $\pi$ is a permutation of $\Z$.

Kadison and Singer believed that KS had a negative answer.  In particular,
on page 397 of \cite{KS} they state:  ``We incline to the view
that such extension is non-unique''. 

Over the 55 year history of the Kadison-Singer Problem, a significant amount
of research was generated resulting in a number of partial results as well as a
large number of equivalent problems.  These include the {\bf Anderson Paving
Conjectures} \cite{A,A3,A2}, the {\bf Akemann-Anderson Projection Paving
Conjecture} \cite{AA}, the {\bf Weaver Conjectures} \cite{W},
the {\bf Casazza-Tremain Conjecture} \cite{CT}, the
{\bf Feichtinger Conjecture} \cite{CCLV}, the {\bf $R_{\epsilon}$-Conjecture}
\cite{CT}, the {\bf Bourgain-Tzafriri Conjecture} \cite{CT}, the {\bf Sundberg Problem}
 \cite{CKBook}.  Many directions for approaching this problem were proposed and
solutions were given for special cases:  All matrices with positive coefficients
are pavable \cite{HKW} as are all matrices with "small" coefficients \cite{BT1}.
Under stronger hypotheses, solutions to the problem were given by 
Berman/Halpern/Kaftal/Weiss \cite{BHKW}, Baranov and Dyakonov \cite{BD},
Paulsen \cite{LP,P1,P}, Lata \cite{L}, Lawton \cite{La}, Popa \cite{Po},
Grochenig \cite{G}, Bownik/Speegle \cite{BS}, Casazza/Christensen/Lindner/Vershynin
\cite{CCLV}, Casazza/Christensen/Kalton \cite{CCK}, Casazza/Kutyniok/Speegle \cite{CKS2},
Casazza/Edidin/Kalra/Paulsen \cite{CEKP} and much
more.

Our goal in this paper is to see how the solution of \cite{MSS} to the Kadison-Singer
Problem answers each of the above problems in a quantative way and to compute
the best available constants at this time.

The paper is organized as follows.  In Section \ref{FT} we introduce the basics
of Hilbert space frame theory which forms the foundation for producing equivalences
of the Paving Conjecture. Next, in Section \ref{MSSW} we give the basic
Marcus/Spielman/Srivastava result proving {\bf Weaver's Conjecture}.  
In Section \ref{MSSPC} we present their
proof of the {\bf Akemann-Anderson Projection Paving Conjecture} and 
the {\bf Anderson Paving Conjecture}.  In Section \ref{EPC} we prove the
{\bf Casazza/Tremain Conjecture}, the {\bf Feichtinger Conjecture}, the
{\bf $R_{\epsilon}$-Conjecture}, and the {\bf Bourgain-Tzafriri Conjecture}.
In Section \ref{HA} we prove the Feichtinger Conjecture in {\bf Harmonic
Analysis} (the stronger form involving {\bf syndetic sets}),
and solve the {\bf Sundberg Problem}.  Section \ref{LDS} contains equivalents of the Paving Conjecture
for {\bf Large and Decomposable subspaces} of a Hilbert space.  Finally,
in Section \ref{A} we will trace some of the history of the Paving Conjecture.

\section{Frame Theory}\label{FT}
\setcounter{equation}{0}

Hilbert space {\bf frame theory} is the tool which is used to connect many of
the equivalent forms of the {\bf Paving Conjecture}.  So we start with
an introduction to this area.
A family of vectors $\{f_i\}_{i\in I}$ in a Hilbert space $\H$
is a {\bf Riesz basic sequence} if there are constants $A,B>0$ so that
for all scalars $\{a_i\}_{i\in I}$ we have:
\begin{displaymath}
A\sum_{i\in I}|a_i |^2 \le \|\sum_{i\in I}a_i f_i \|^2 \le
B\sum_{i\in I}|a_i|^2.
\end{displaymath}
We call $A,B$ the {\bf lower and upper Riesz basis
bounds} for $\{f_i\}_{i\in I}$.  If the Riesz
basic sequence $\{f_i\}_{i\in I}$ spans $\H$
we call it a {\bf Riesz basis} for $\H$.  So $\{f_i\}_{i\in I}$
is a Riesz basis for $\H$ means there is an orthonormal basis
$\{e_i\}_{i\in I}$ so that the operator $T(e_i )=f_i$ is invertible.
In particular, each Riesz basis is {\bf bounded}.  That is,
$0 < \inf_{i\in I}\|f_i\| \le \sup_{i\in I}\|f_i\| < \infty$. 

Hilbert space frames were introduced by Duffin and Schaeffer
\cite{DS} to address some very deep problems in nonharmonic
Fourier series (see \cite{Y}).
A family $\{f_i \}_{i\in I}$ of elements of a (finite
or infinite dimensional) Hilbert space
$\H$ is called a {\bf frame} for $\H$ if
there are constants $0<A\le B < \infty$ (called the
{\bf lower and upper frame bounds}, respectively)
so that for all $f\in \H$
\begin{equation}\label{E5}
A\|f\|^2 \le \sum_{i\in I}|\langle f,f_i \rangle |^2
\le
B\|f\|^2.
\end{equation}
If we only have the right hand inequality in Equation \ref{E5}
 we call $\{f_i\}_{i\in I}$
a {\bf Bessel sequence with Bessel bound B}.
  If $A=B$, we call this an
$A$-{\bf tight frame} and if $A=B=1$, it is called a
{\bf Parseval frame}.  If all the frame elements have the same
norm, this is an {\bf equal norm} frame and if the
frame elements are of unit norm, it is a {\bf unit norm
frame}.  It is immediate that $\|f_i\|^2\le B$.  If also
inf $\|f_i\|>0$, $\{f_i\}_{i\in I}$ is a {\bf bounded frame}.
The numbers $\{\langle f,f_i\rangle\}_{i\in I}$ are
 the {\bf frame coefficients} of the vector $f\in \H$.
If $\{f_i \}_{i\in I}$ is a Bessel sequence,
the {\bf synthesis operator} for $\{f_i\}_{i\in I}$
is the bounded linear operator $T:{\ell}_{2}(I) \rightarrow
\H$ given by $T(e_i )= f_i$ for all $i\in I$.
The
{\bf analysis operator} for $\{f_i\}_{i\in I}$
is $T^{*}$ and satisfies: $T^{*}(f) =
\sum_{i\in I} \langle f,f_i \rangle e_i$.  In particular,
$$
\|T^{*}f\|^2 = \sum_{i\in I}|\langle f,f_i\rangle |^2,\ \
\mbox{for all $f\in \H$},
$$
and hence the smallest Bessel bound for $\{f_i\}_{i\in I}$
equals $\|T^{*}\|^2=\|T\|^2$.
Comparing this to Equation \ref{E5} we have:

\begin{theorem}\label{FTTT}
Let $\H$ be a Hilbert space and $T:{\ell}_2(I)\rightarrow
\H$, $Te_i = f_i$ be a bounded linear operator.  The following
are equivalent:
\begin{enumerate}
\item $\{f_i\}_{i\in I}$ is a frame for $\H$.
\vspace*{.04cm}
\item The operator $T$ is bounded, linear, and onto.
\vspace*{.04cm}
\item The operator $T^{*}$ is an (possibly into) isomorphism.
\end{enumerate}
Moreover, if $\{f_i\}_{i\in I}$ is a Riesz basis then it is a frame and the
Riesz bounds equal the frame bounds.
\end{theorem}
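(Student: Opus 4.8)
The plan is to route everything through the identity $\|T^{*}f\|^{2}=\sum_{i\in I}|\langle f,f_{i}\rangle|^{2}$ recorded just above the statement, together with the standard Hilbert-space duality between surjectivity of $T$ and bounded-belowness of $T^{*}$. First I would dispose of (1) $\Leftrightarrow$ (3). The right-hand frame inequality reads $\|T^{*}f\|^{2}\le B\|f\|^{2}$, which holds automatically with $B=\|T\|^{2}$ since $T$, and hence $T^{*}$, is bounded. Thus being a frame is equivalent to the existence of $A>0$ with $A\|f\|^{2}\le\|T^{*}f\|^{2}$ for all $f\in\H$, i.e. to $T^{*}$ being bounded below; and a bounded operator on a Hilbert space is bounded below exactly when it is injective with closed range, i.e. an isomorphism onto its (possibly proper) range. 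This is precisely (1) $\Leftrightarrow$ (3).

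Next comes (2) $\Leftrightarrow$ (3). For (2) $\Rightarrow$ (3): if $T$ is onto, the open mapping theorem gives $\delta>0$ with $\delta B_{\H}\subseteq T(B_{\ell_{2}(I)})$, so for every $f\in\H$ one has $\|T^{*}f\|=\sup_{\|g\|\le 1}|\langle f,Tg\rangle|\ge\sup_{\|h\|\le\delta}|\langle f,h\rangle|=\delta\|f\|$, whence $T^{*}$ is bounded below. For (3) $\Rightarrow$ (2): if $\|T^{*}f\|\ge\sqrt{A}\,\|f\|$ then $T^{*}$ is injective, and if $T^{*}f_{n}\to y$ then $(f_{n})$ is Cauchy (since $\|f_{n}-f_{m}\|\le A^{-1/2}\|T^{*}f_{n}-T^{*}f_{m}\|$), so $y\in\operatorname{ran}T^{*}$; hence $\operatorname{ran}T^{*}$ is closed. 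By the closed range theorem $\operatorname{ran}T$ is closed, and $\operatorname{ran}T=\overline{\operatorname{ran}T}=(\ker T^{*})^{\perp}=\{0\}^{\perp}=\H$, i.e. $T$ is onto. One can instead restrict to $(\ker T)^{\perp}=\overline{\operatorname{ran}T^{*}}$ and argue directly, avoiding the closed range theorem.

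For the ``moreover'' clause: if $\{f_{i}\}_{i\in I}$ is a Riesz basis with bounds $A,B$, then by definition $A\|x\|^{2}\le\|Tx\|^{2}\le B\|x\|^{2}$ for all $x\in\ell_{2}(I)$, so $T$ is bounded, bounded below, and has dense range, hence is invertible; in particular it is onto, so $\{f_{i}\}_{i\in I}$ is a frame by (2) $\Rightarrow$ (1). To see the optimal constants agree, write $S=TT^{*}$ for the frame operator, so that $\langle Sf,f\rangle=\|T^{*}f\|^{2}$: the optimal frame bounds are then $\|S^{-1}\|^{-1}$ and $\|S\|$, while the optimal Riesz bounds are $\inf_{\|x\|=1}\|Tx\|^{2}=\|T^{-1}\|^{-2}$ and $\|T\|^{2}$. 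These coincide because $\|S\|=\|T\|^{2}$ and $\|S^{-1}\|=\|T^{-1}\|^{2}$, using $\|(T^{-1})^{*}\|=\|T^{-1}\|$.

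The only genuinely non-routine step is (3) $\Rightarrow$ (2): upgrading ``$T^{*}$ is bounded below'' to ``$T$ is surjective'' is where the real content sits, and it requires the closed range theorem or the hands-on substitute of showing that $\operatorname{ran}T$ is simultaneously dense and closed. Everything else is bookkeeping around the single identity $\|T^{*}f\|^{2}=\sum_{i}|\langle f,f_{i}\rangle|^{2}$ and the boundedness of $T$.
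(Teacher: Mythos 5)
Your proof is correct, and it follows the same route the paper intends: the paper offers no detailed argument, simply stating the theorem as the result of ``comparing'' the identity $\|T^{*}f\|^{2}=\sum_{i\in I}|\langle f,f_{i}\rangle|^{2}$ with the frame inequality, which is exactly the pivot of your argument. Your filling in of the standard duality between surjectivity of $T$ and bounded-belowness of $T^{*}$, and of the computation $\|S\|=\|T\|^{2}$, $\|S^{-1}\|=\|T^{-1}\|^{2}$ for the ``moreover'' clause, is accurate and consistent with what the paper leaves implicit.
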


It follows that a Bessel sequence
is a Riesz basic sequence if and only if $T^{*}$ is onto.
   The {\bf frame
operator} for the frame is the positive, self-adjoint invertible
operator $S=TT^{*}:\H \rightarrow \H$.  That is,
$$
Sf = TT^{*}f = T\left ( \sum_{i\in I}\langle f,f_i\rangle e_i
\right ) = \sum_{i\in I}\langle f,f_i\rangle Te_i =
\sum_{i\in I}\langle f,f_i\rangle f_i.
$$
In particular,
$$
\langle Sf,f\rangle = \sum_{i\in I}|\langle f,f_i \rangle|^2.
$$
It follows that $\{f_i\}_{i\in I}$ is a frame with frame
bounds $A,B$ if and only if $A \cdot I \le S \le B \cdot I$.
So $\{f_i\}_{i\in I}$ is a Parseval frame if and only if $S=I$.
{\bf Reconstruction} of vectors in $\H$ is achieved via
the formula:
\begin{eqnarray*}
f &=& SS^{-1}f = \sum_{i\in I}\langle S^{-1}f,f_i \rangle f_i \\
&=& \sum_{i\in I}\langle f,S^{-1}f_i \rangle f_i \\
&=& \sum_{i\in I}\langle f,f_i \rangle S^{-1}f_i \\
&=& \sum_{i\in I}\langle f,S^{-1/2}f_i \rangle S^{-1/2}f_i.
\end{eqnarray*}

Recall that for vectors $u,v\in \H$ the {\bf outer product} of these
vectors $uv^*$ is the rank one operator defined by:
\[ (uv^*)(x) = \langle x,v\rangle u.\]
In particular, if $\|u\|=1$ then $uu^*$ is the rank one projection
of $\H$ onto span u.  Also, $v^*u= \langle u,v\rangle$.
 The frame operator $S$ of the frame
$\{f_i\}_{i\in I}$ is representable as
\[ S = \sum_{i\in I}f_if_i^*.\]
The {\bf Gram operator} of the frame $\{f_i\}_{i\in I}$ is
$G=T^*T$ and has the matrix
\[ G = (\langle f_i,f_j\rangle)_{i,j\in I}= (f_j^*f_i)_{i,j\in I}.\]
It follows that the non-zero eigenvalues of $G$ equal the non-zero
eigenvalues of $S$ and hence $\|G\|=\|S\|$.

I alsot follows that $\{S^{-1/2}f_i\}_{i\in I}$ is a Parseval frame
{\bf isomorphic} to $\{f_i\}_{i\in I}$.  Two sequences
$\{f_i\}_{i\in I}$ and $\{g_i\}_{i\in I}$ in a
Hilbert space are {\bf isomorphic} if there is a well-defined invertible
operator $T$ between their spans with $Tf_i = g_i$
for all $i\in I$.  
We now show that there is a simple
way to tell when two frame sequences are isomorphic.

\begin{proposition}\label{FTP10}
Let $\{f_i\}_{i\in I}$, $\{g_i\}_{i\in I}$ be frames for a Hilbert
space $\H$ with analysis operators $T_1$ and $T_2$, respectively.
The following are equivalent:

(1)  The frames $\{f_i\}_{i\in I}$ and $\{g_i\}_{i\in I}$ are isomorphic.

(2)  ker $T_1$ = ker $T_2$.
\end{proposition}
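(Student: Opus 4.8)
The plan is to read $T_1,T_2$ as the synthesis operators $\ell_2(I)\to\H$ given by $T_1e_i=f_i$ and $T_2e_i=g_i$; by Theorem \ref{FTTT} each is a bounded linear map \emph{onto} $\H$, so an isomorphism between the frames means a bounded invertible $L:\H\to\H$ with $Lf_i=g_i$ for all $i\in I$. I would prove the two implications separately, the second being the one carrying the actual content.

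For $(1)\Rightarrow(2)$: if $L:\H\to\H$ is bounded and invertible with $Lf_i=g_i$ for all $i$, then $LT_1e_i = Lf_i = g_i = T_2e_i$ for every $i\in I$; since $\{e_i\}_{i\in I}$ is an orthonormal basis of $\ell_2(I)$ and $LT_1,T_2$ are bounded, this forces $LT_1=T_2$, and invertibility of $L$ then gives $\ker T_1=\ker(LT_1)=\ker T_2$.

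For $(2)\Rightarrow(1)$: set $N:=\ker T_1=\ker T_2$ and split $\ell_2(I)=N\oplus N^{\perp}$. The restrictions $T_1|_{N^{\perp}}:N^{\perp}\to\H$ and $T_2|_{N^{\perp}}:N^{\perp}\to\H$ are bounded bijections --- surjective because each $T_j$ is onto $\H$ and annihilates $N$, injective because $N$ is exactly the kernel --- so by the open mapping theorem each has a bounded inverse. I would then put $L:=\bigl(T_2|_{N^{\perp}}\bigr)\bigl(T_1|_{N^{\perp}}\bigr)^{-1}:\H\to\H$, which is bounded and invertible. To verify $Lf_i=g_i$, decompose $e_i=n_i+m_i$ with $n_i\in N$, $m_i\in N^{\perp}$; then $f_i=T_1e_i=T_1m_i$ and $g_i=T_2e_i=T_2m_i$, so $\bigl(T_1|_{N^{\perp}}\bigr)^{-1}f_i=m_i$ and hence $Lf_i=T_2m_i=g_i$, i.e.\ the two frames are isomorphic.

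The main obstacle is the one genuinely nontrivial point in $(2)\Rightarrow(1)$: producing an operator that is bounded \emph{and} has a bounded inverse. This is exactly where the open mapping theorem (equivalently, the bounded inverse theorem applied to the injective bounded surjections $T_j|_{N^{\perp}}$ between Hilbert spaces) enters; everything else --- the identity $Lf_i=g_i$ and the forward implication --- is formal bookkeeping with the orthogonal decomposition of $\ell_2(I)$. If one prefers to bypass the open mapping theorem, Theorem \ref{FTTT}(3) already supplies the needed bounded-below property: $T_j^{*}$ is an isomorphism onto its range $N^{\perp}$, so $(T_j|_{N^{\perp}})^{-1}$ is automatically bounded, and the same $L$ results.
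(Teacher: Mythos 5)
Your proof is correct and follows essentially the same route as the paper: the forward direction via $LT_1=T_2$, and the backward direction by forming $L=\bigl(T_2|_{N^{\perp}}\bigr)\bigl(T_1|_{N^{\perp}}\bigr)^{-1}$ using that each $T_j$ restricted to the orthogonal complement of its kernel is an isomorphism. You simply spell out the boundedness/invertibility and the identity $Lf_i=g_i$ in more detail than the paper does, and you correctly read the $T_j$ as the synthesis operators, which is what the paper's own proof uses.
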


{\it Proof}:
$(1)\Rightarrow (2)$:  If $Lf_i = g_i$ is an isomorphism, then
$Lf_i = LT_1 e_i = g_i = T_2 e_i$ quickly implies our statement
about kernels.

$(2)\Rightarrow (1)$:  Since $T_i|_{{(ker\ T_i)}^{\perp}}$ is an
isomorphism for $i=1,2$, if the kernels are equal, then
$$
T_2 \left ( T_1 |_{(ker\ T_2)^{\perp}}\right )^{-1} f_i = g_i
$$
is an isomorphism.
\qed

 In the finite
dimensional case, if
$\{g_j\}_{j=1}^{n}$ is an orthonormal basis of
${\ell}_2^n$ consisting of eigenvectors for $S$ with respective eigenvalues
$\{{\lambda}_j\}_{j=1}^{n}$, then for every $1\le j\le n$,
$\sum_{i\in I}|\langle f_i ,g_j \rangle|^2 = {\lambda}_j$.  In particular,
$\sum_{i\in I}\|f_i\|^2 =$ trace S ($=n$ if $\{f_i\}_{i\in I}$
is a Parseval frame).  An important result is

\begin{theorem}\label{FT20}
If $\{f_i\}_{i\in I}$ is a frame for $\H$ with frame bounds
$A,B$ and $P$ is any orthogonal projection on $\H$, then
$\{Pf_i\}_{i\in I}$ is a frame for $P\H$ with frame bounds
$A,B$.
\end{theorem}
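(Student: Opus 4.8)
The plan is to reduce the frame inequality for $\{Pf_i\}_{i\in I}$ on the subspace $P\H$ directly to the frame inequality for $\{f_i\}_{i\in I}$ on all of $\H$, exploiting that an orthogonal projection is both self-adjoint and idempotent. First I would fix an arbitrary $f\in P\H$, so that $Pf=f$, and rewrite each frame coefficient as $\langle f,Pf_i\rangle = \langle Pf,f_i\rangle = \langle f,f_i\rangle$, using $P^*=P$ in the first equality and $Pf=f$ in the second.

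Summing the squared moduli over $i\in I$ then gives $\sum_{i\in I}|\langle f,Pf_i\rangle|^2 = \sum_{i\in I}|\langle f,f_i\rangle|^2$. Since $f$ is in particular an element of $\H$ and $\{f_i\}_{i\in I}$ is a frame for $\H$ with bounds $A,B$, Equation \ref{E5} applies and yields $A\|f\|^2 \le \sum_{i\in I}|\langle f,f_i\rangle|^2 \le B\|f\|^2$. Combining the last two displays shows $A\|f\|^2 \le \sum_{i\in I}|\langle f,Pf_i\rangle|^2 \le B\|f\|^2$ for every $f\in P\H$, which is exactly the assertion that $\{Pf_i\}_{i\in I}$ is a frame for $P\H$ with frame bounds $A$ and $B$ (note that each $Pf_i$ indeed lies in $P\H$, so the statement makes sense).

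There is essentially no obstacle here; the only point requiring any care is the bookkeeping of where self-adjointness and idempotency of $P$ are used, together with the observation that we must test the inequality only against vectors in $P\H$ rather than against all of $\H$ --- this is precisely what makes the lower bound $A$ survive, since for a general $f\in\H$ one would only obtain $\sum_{i\in I}|\langle f,Pf_i\rangle|^2 = \sum_{i\in I}|\langle Pf,f_i\rangle|^2 \ge A\|Pf\|^2$, which may be far smaller than $A\|f\|^2$. Alternatively, one could argue operator-theoretically: the frame operator of $\{Pf_i\}_{i\in I}$ is $\sum_{i\in I}(Pf_i)(Pf_i)^* = P\left(\sum_{i\in I}f_if_i^*\right)P = PSP$, and compressing $A\cdot I \le S \le B\cdot I$ to $P\H$ gives the bounds on $P\H$ because $\langle PSPf,f\rangle = \langle Sf,f\rangle$ when $Pf=f$; but the coefficient-wise computation is the cleanest route.
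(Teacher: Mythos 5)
Your argument is correct and is essentially the paper's own proof: for $f\in P\H$ one uses $P^*=P$ and $Pf=f$ to get $\langle f,Pf_i\rangle=\langle f,f_i\rangle$, and then the frame inequality for $\{f_i\}_{i\in I}$ gives the bounds $A,B$ on $P\H$. Nothing further is needed.
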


{\it Proof}:
For any $f\in P\H$,
$$
\sum_{i\in I}|\langle f,Pf_i \rangle |^2 =
\sum_{i\in I}|\langle Pf,f_i \rangle |^2 = \sum_{i\in I}|\langle
f,f_i \rangle |^2.
$$
\qed

A fundamental result in frame theory was proved independently
by Naimark and Han/Larson \cite{C,HL}.  For completeness we
include its simple proof.

\begin{theorem}\label{T3}
A family $\{f_i\}_{i\in I}$ is a Parseval frame for a Hilbert
space $\H$ if
and only if there is a containing Hilbert space $\H \subset {\ell}_2 (I)$
with an orthonormal basis $\{e_i\}_{i\in I}$ so that
the orthogonal projection $P$ of ${\ell}_2 (I)$ onto $\H$ satisfies
$P(e_i) = f_i$ for all $i\in I$.
\end{theorem}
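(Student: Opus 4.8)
The plan is to prove both implications directly from the operator-theoretic picture of frames built up above, the central object being the analysis operator $T^{*}$ of the frame $\{f_i\}_{i\in I}$.

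For the easy direction $(\Leftarrow)$, suppose we are given a containing Hilbert space $\H\subset \ell_2(I)$ with orthonormal basis $\{e_i\}_{i\in I}$ and an orthogonal projection $P$ of $\ell_2(I)$ onto $\H$ with $Pe_i=f_i$ for all $i$. Since $\{e_i\}_{i\in I}$ is a Parseval frame for $\ell_2(I)$, Theorem \ref{FT20} applied to the projection $P$ immediately yields that $\{Pe_i\}_{i\in I}=\{f_i\}_{i\in I}$ is a frame for $P\ell_2(I)=\H$ with frame bounds $1,1$, i.e.\ a Parseval frame. (Equivalently, for $f\in\H$ one has $\langle f,f_i\rangle=\langle f,Pe_i\rangle=\langle Pf,e_i\rangle=\langle f,e_i\rangle$ since $P=P^{*}$ and $Pf=f$, and then Parseval's identity for $\{e_i\}_{i\in I}$ gives $\sum_i|\langle f,f_i\rangle|^2=\|f\|^2$.)

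For the main direction $(\Rightarrow)$, assume $\{f_i\}_{i\in I}$ is a Parseval frame for $\H$, with synthesis operator $T:\ell_2(I)\to\H$, $Te_i=f_i$, and analysis operator $T^{*}$. The Parseval condition says $\|T^{*}f\|^2=\sum_i|\langle f,f_i\rangle|^2=\|f\|^2$, so $T^{*}:\H\to\ell_2(I)$ is an isometric embedding, and I would use it to identify $\H$ with the closed subspace $T^{*}\H\subseteq\ell_2(I)$. Next I would check that $P:=T^{*}T$ is the orthogonal projection of $\ell_2(I)$ onto $T^{*}\H$: it is self-adjoint, it satisfies $P^{2}=T^{*}(TT^{*})T=T^{*}T=P$ because $TT^{*}=S=I$ (Parseval), and its range is exactly $T^{*}\H$ since $T$ is onto $\H$ and $T^{*}g=T^{*}(TT^{*})g=P(T^{*}g)$ for every $g\in\H$. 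Finally, $Pe_i=T^{*}Te_i=T^{*}f_i$, which under the identification of $\H$ with $T^{*}\H$ is precisely $f_i$; hence $Pe_i=f_i$ for all $i\in I$, as required.

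The only genuine obstacle here is doing the identification of $\H$ with its isometric image $T^{*}\H$ carefully enough that the equation $Pe_i=f_i$ literally makes sense as stated; once that bookkeeping is in place, everything reduces to the two identities $\|T^{*}f\|=\|f\|$ and $TT^{*}=I$, both of which are just restatements of the Parseval hypothesis recorded in the discussion preceding the theorem.
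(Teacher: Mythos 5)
Your proof is correct and takes essentially the same route as the paper: the easy direction is Theorem \ref{FT20}, and the converse uses that the analysis operator $T^{*}$ of a Parseval frame is an isometry, identifies $\H$ with $T^{*}\H$, and checks that $Pe_i=T^{*}f_i$. The only cosmetic difference is that you exhibit the projection explicitly as $P=T^{*}T$ (using $TT^{*}=I$), whereas the paper pins down $Pe_i$ by an inner-product computation against arbitrary elements $T^{*}f$ of $T^{*}\H$; both are the same Naimark-type argument.
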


{\it Proof}:
The ``only if'' part is Theorem \ref{FT20}.  For the ``if'' part,
if $\{f_i\}_{i\in I}$ is a Parseval frame, then the synthesis operator
 $T:{\ell}_2(I) \rightarrow \H$ is a partial isometry.  So $T^{*}$
is an isometry and we can associate $\H$ with $T^{*}\H$.  Now, for
all $i\in I$ and all $g=T^{*}f \in T^{*}\H$ we have
$$
\langle T^{*}f,Pe_i \rangle = \langle T^{*}f,e_i \rangle
= \langle f,Te_i \rangle = \langle f,f_i \rangle = \langle T^{*}f,
T^{*}f_i \rangle.
$$
It follows that $Pe_i = T^{*}f_i$ for all $i\in I$.
\qed

  For an introduction
to frame theory we refer the reader to Han/Kornelson/Larson/Weber 
\cite{HKLW}, Christensen \cite{C}
and Casazza/Kutyniok \cite{CKBook}.

\section{Marcus/Spielman/Srivastave and Weaver's Conjecture}\label{MSSW}

In \cite{MSS} the authors do a deep analysis of what
 they call {\it mixed characteristic polynomials} to prove a famous conjecture of Weaver \cite{W} which
Weaver had earlier shown is equivalent to the {\it Paving
Conjecture} which Anderson \cite{A} had previously
shown was equivalent to the {\bf Kadison-Singer Problem}.  We will mearly state the main theorem
from \cite{MSS} here and use it to find the best
constants in the various equivalent forms of the
Kadison-Singer Problem.

\begin{theorem}[Marcul/Spielman/Srivastava]\label{MSS1}
Let r be a positive integer and let $u_1,u_2,\ldots,u_m \in \CC^d$ be vectors such that
\[ \sum_{i=1}^mu_iu_i^* = I,\]
and $\|u_i\|^2 \le \delta$ for all $i$.  Then there
is a partition $\{S_1,S_2,\ldots,S_r\}$ of $[m]$
such that
\[ \left \| \sum_{i\in S_j}u_iu_i^* \right \|
\le \left ( \frac{1}{\sqrt{r}}+\sqrt{\delta}
\right )^{2},\mbox{ for all }j=1,2,\ldots,r.\]
\end{theorem}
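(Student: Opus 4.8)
The plan is to reproduce the argument of Marcus, Spielman and Srivastava \cite{MSS}, which has four moving parts: (a) a tensoring trick converting the sought partition into a block-diagonal largest-eigenvalue statement; (b) the method of interlacing families of polynomials, reducing the existence of a good partition to a bound on the largest root of one deterministic polynomial; (c) the identification of that polynomial as a mixed characteristic polynomial; and (d) a multivariate barrier-function estimate for its largest root.

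First I would carry out the reduction. Work in $\CC^{rd}=\CC^{r}\otimes\CC^{d}$ with $e_{1},\dots,e_{r}$ the standard basis of $\CC^{r}$, and let $v_{1},\dots,v_{m}$ be \emph{independent} random vectors with $v_{i}=\sqrt{r}\,(e_{k}\otimes u_{i})$ with probability $1/r$ for each $k\in[r]$. Then $\E[v_{i}v_{i}^{*}]=I_{r}\otimes u_{i}u_{i}^{*}$, so $\sum_{i=1}^{m}\E[v_{i}v_{i}^{*}]=I_{rd}$, while $\tr\E[v_{i}v_{i}^{*}]=r\|u_{i}\|^{2}\le r\delta$. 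For the outcome in which $i\in S_{k}$ exactly when $v_{i}=\sqrt{r}\,(e_{k}\otimes u_{i})$, the matrix $\sum_{i}v_{i}v_{i}^{*}=\bigoplus_{k=1}^{r}\big(r\sum_{i\in S_{k}}u_{i}u_{i}^{*}\big)$ is block diagonal, so $\big\|\sum_{i}v_{i}v_{i}^{*}\big\|=r\max_{k}\big\|\sum_{i\in S_{k}}u_{i}u_{i}^{*}\big\|$. Hence it suffices to produce \emph{one} outcome with $\lambda_{\max}\!\big(\sum_{i}v_{i}v_{i}^{*}\big)\le(1+\sqrt{r\delta})^{2}$: dividing by $r$ turns this into $\max_{k}\big\|\sum_{i\in S_{k}}u_{i}u_{i}^{*}\big\|\le\tfrac{1}{r}(1+\sqrt{r\delta})^{2}=\big(\tfrac{1}{\sqrt{r}}+\sqrt{\delta}\big)^{2}$.

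Next I would set up the interlacing family. Write $\chi(M)(x)=\det(xI-M)$, and for a partial assignment $(k_{1},\dots,k_{j})\in[r]^{j}$ let $q_{k_{1},\dots,k_{j}}(x)$ be the conditional expectation of $\chi\big(\sum_{i}v_{i}v_{i}^{*}\big)(x)$ given that $v_{l}=\sqrt{r}\,(e_{k_{l}}\otimes u_{l})$ for $1\le l\le j$. These form a depth-$m$, $r$-ary tree of monic polynomials whose leaves are the $r^{m}$ outcomes, whose root is $q_{\emptyset}(x)=\E\big[\chi\big(\sum_{i}v_{i}v_{i}^{*}\big)(x)\big]$, and in which each internal node is the average of its children. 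Using that $\det(xI-N-tww^{*})$ is affine in $t$, and the fact from part (c) that the expected characteristic polynomial of a sum of independent random rank-one terms is a mixed characteristic polynomial and hence real-rooted (applied to arbitrary reweightings of the distribution of $v_{j+1}$), every convex combination of the $r$ children of a node is real-rooted; by the classical lemma that a finite family of real-rooted polynomials with positive leading coefficients has a common interlacing iff all of its convex combinations are real-rooted, the children of each node have a common interlacing. The interlacing-family principle then produces a leaf --- a partition $\{S_{1},\dots,S_{r}\}$ --- with $\lambda_{\max}\!\big(\sum_{i}v_{i}v_{i}^{*}\big)$ at most the largest root of $q_{\emptyset}$. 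To evaluate $q_{\emptyset}$, put $A_{i}=\E[v_{i}v_{i}^{*}]$ and use that $\prod_{i=1}^{m}(1-\partial_{z_{i}})$ followed by the substitution $z=0$ extracts exactly the multilinear-in-$z$ coefficients of a polynomial, that the multilinear coefficients of $\det(xI+\sum_{i}z_{i}v_{i}v_{i}^{*})$ are linear in each rank-one $v_{i}v_{i}^{*}$, and that the $v_{i}v_{i}^{*}$ are independent with $\E[v_{i}v_{i}^{*}]=A_{i}$; these give
\[
q_{\emptyset}(x)=\mu[A_{1},\dots,A_{m}](x):=\Big(\prod_{i=1}^{m}(1-\partial_{z_{i}})\Big)\det\Big(xI+\sum_{i=1}^{m}z_{i}A_{i}\Big)\Big|_{z_{1}=\cdots=z_{m}=0}.
\]
Since $\det(xI+\sum_{i}z_{i}A_{i})$ is real stable ($A_{i}\succeq0$, and the $x$-coefficient is positive definite) and the operators $1-\partial_{z_{i}}$ and the specializations $z_{i}\mapsto0$ preserve real stability, $\mu[A_{1},\dots,A_{m}]$ is real-rooted.

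It remains to bound the largest root of $\mu[A_{1},\dots,A_{m}]$, where $A_{i}\succeq0$, $\sum_{i}A_{i}=I$ and $\tr A_{i}\le r\delta$; this is the hard part. I would use the multivariate barrier method: for a real stable $p$ and a point $\mathbf{x}$ lying strictly above every real root of $p$, define the barrier functions $\Phi^{j}_{p}(\mathbf{x})=\partial_{z_{j}}p(\mathbf{x})/p(\mathbf{x})$; starting from $p_{0}=\det(xI+\sum_{i}z_{i}A_{i})$ and a point all of whose coordinates are just above $\big\|\sum_{i}A_{i}\big\|=1$, apply $1-\partial_{z_{1}},\dots,1-\partial_{z_{m}}$ one at a time, each time first shifting the point upward by a prescribed amount in the active coordinates. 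Using $\tr A_{i}\le r\delta$ to keep the barrier functions small initially, and the monotonicity and convexity of barrier functions in the region above the roots, one checks inductively that ``the current point lies strictly above every real root of the current polynomial'' is preserved, and that the shifts can be chosen so the total upward displacement is $\sqrt{r\delta}\,(2+\sqrt{r\delta})$; letting the initial point tend to $1$ gives $\lambda_{\max}(\mu[A_{1},\dots,A_{m}])\le(1+\sqrt{r\delta})^{2}$, which together with the earlier steps proves the theorem. I expect this last bookkeeping to be the main obstacle: establishing the exact inequality relating $\Phi^{j}_{(1-\partial_{z_{i}})p}$ after the shift to $\Phi^{j}_{p}$, and checking that the shifts telescope to precisely $(1+\sqrt{r\delta})^{2}$. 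The next most delicate ingredient is the common-interlacing claim of part (b), which is what legitimizes replacing a random partition by the single explicit polynomial $\mu[A_{1},\dots,A_{m}]$.
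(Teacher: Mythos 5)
The paper does not actually prove Theorem \ref{MSS1}: it explicitly states that it ``merely states the main theorem from \cite{MSS}'' and then uses it, so there is no in-paper argument to compare against. Your outline is a faithful reconstruction of the Marcus/Spielman/Srivastava proof itself, and the one step you carry out in full detail --- the tensoring reduction with $v_i=\sqrt{r}\,(e_k\otimes u_i)$, giving $\sum_i\E[v_iv_i^*]=I_{rd}$, $\E\|v_i\|^2\le r\delta$, the block-diagonal identity, and the arithmetic $\frac{1}{r}(1+\sqrt{r\delta})^2=(\frac{1}{\sqrt{r}}+\sqrt{\delta})^2$ --- is correct; the interlacing-family and mixed-characteristic-polynomial ingredients are described accurately but, as you acknowledge, only cited rather than re-proved. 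One detail of your barrier bookkeeping is off as stated: starting the barrier argument at a point just above $1$ and letting it tend to $1$ does not satisfy the hypothesis of the shift lemma for the required per-coordinate shift (with $s=\sqrt{r\delta}$ one would need $1/(1-s^2)\le s(2+s)$, which fails, e.g., at $s=1/3$); in \cite{MSS} the starting point is $t=r\delta+\sqrt{r\delta}$ in every coordinate with each coordinate shifted once by $1+\sqrt{r\delta}$, and it is the sum $t+(1+\sqrt{r\delta})=(1+\sqrt{r\delta})^2$ that gives the bound. Since you flagged exactly this bookkeeping as the unverified obstacle, the plan is sound once that accounting is replaced by the correct one, but be aware that as a whole your proposal is an outline of the cited proof rather than a self-contained argument --- which is also all the paper itself offers for this statement.
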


In \cite{W}, Weaver reformulated the Paving Conjecture into 
{\bf Discrepancy Theory} which generated a number of
new equivalences of the Paving Conjecture \cite{CT,CFTW}
and set the stage for the eventual solution
to the problem.
Setting $r=2$ and $\delta = 1/18$ \cite{MSS}, this implies the 
original {\bf Weaver Conjecture $KS_2$} \cite{W} with $\eta=18$ and
$\theta=2$.

\begin{theorem}[Marcus/Spielman/Srivastave]\label{MSS10}
There are universal constants $\eta \ge 2$ and $\theta
>0$ so that the following holds.  Let $u_1,u_2,\ldots,
u_m \in \CC^d$ satisfy $\|u_i\|\le 1$ for all $i$
and suppose
\[ \sum_{i=1}^M|\langle u,u_i\rangle|^2
= \eta,\mbox{ for every unit vector } u\in \CC^d.\]
Then there is a partition $S_1,S_2$ of $\{1,2,\ldots,m\}$ so that
\[ \sum_{i\in S_j}|\langle u,u_i\rangle|^2 \le 
\eta - \theta,\]
for every unit vector $u\in \CC^d$ and each $j\in \{
1,2\}.$

Moreover, $\eta=18$ and $\theta=2$ work.
\end{theorem}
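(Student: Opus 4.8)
The plan is to deduce this statement directly from Theorem \ref{MSS1} by a normalization argument, with only an elementary optimization at the end. First I would observe that for any unit vector $u \in \CC^d$ one has $\sum_{i=1}^m |\langle u, u_i\rangle|^2 = \langle (\sum_{i=1}^m u_iu_i^*)\,u, u\rangle$, so the hypothesis $\sum_{i=1}^m |\langle u,u_i\rangle|^2 = \eta$ for \emph{every} unit vector is equivalent to the operator identity $\sum_{i=1}^m u_iu_i^* = \eta I$. In particular the $u_i$ span $\CC^d$, so no degeneracy enters. Likewise, for each subset $S \subseteq [m]$ the operator $A_S = \sum_{i\in S}u_iu_i^*$ is positive, so $\|A_S\| = \sup_{\|u\|=1}\sum_{i\in S}|\langle u,u_i\rangle|^2$; thus the conclusion we want is exactly an operator-norm bound $\|A_{S_j}\| \le \eta - \theta$.

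Next I would rescale: put $v_i = u_i/\sqrt{\eta}$. Then $\sum_{i=1}^m v_iv_i^* = I$ and $\|v_i\|^2 = \|u_i\|^2/\eta \le 1/\eta$. Applying Theorem \ref{MSS1} with $r = 2$ and $\delta = 1/\eta$ produces a partition $\{S_1, S_2\}$ of $[m]$ with $\bigl\|\sum_{i\in S_j} v_iv_i^*\bigr\| \le \bigl(1/\sqrt{2} + 1/\sqrt{\eta}\bigr)^2$ for $j = 1,2$. Multiplying this operator inequality through by $\eta$ and using the identification of $\|A_{S_j}\|$ with a supremum over unit vectors gives
\[ \sup_{\|u\|=1}\sum_{i\in S_j}|\langle u,u_i\rangle|^2 \ \le\ \eta\left(\frac{1}{\sqrt 2}+\frac{1}{\sqrt\eta}\right)^{2} \ =\ \frac{\eta}{2}+\sqrt{2\eta}+1 . \]

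Finally, the theorem follows once the parameters are chosen so that this right-hand side is at most $\eta - \theta$, i.e. $\theta \le \eta/2 - \sqrt{2\eta} - 1$. For $\eta > 2$ this quantity is positive precisely when $(\eta/2 - 1)^2 > 2\eta$, that is when $\eta^2 - 12\eta + 4 > 0$, i.e. $\eta > 6 + 4\sqrt{2} \approx 11.66$; and at the convenient integer value $\eta = 18$ it equals $9 - 6 - 1 = 2$, so $(\eta,\theta) = (18,2)$ is an admissible pair (and $18 \ge 2$, as the statement requires). I do not expect a genuine obstacle here. The only real content beyond bookkeeping is the normalization that turns the frame-type hypothesis $\sum u_iu_i^* = \eta I$ into the isotropic hypothesis $\sum v_iv_i^* = I$ demanded by Theorem \ref{MSS1}, together with the arithmetic that locates the break-even value $6 + 4\sqrt{2}$ and thereby justifies the clean choice $\eta = 18$, $\theta = 2$; for a general $\eta$ above this threshold one simply takes $\theta = \eta/2 - \sqrt{2\eta} - 1$, and the rest is the routine translation between operator norms and sums of squared inner products already recorded in Section \ref{FT}.
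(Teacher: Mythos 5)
Your argument is correct and is exactly the route the paper takes: the paper simply asserts that setting $r=2$ and $\delta = 1/18$ in Theorem \ref{MSS1} yields Theorem \ref{MSS10} with $\eta=18$, $\theta=2$, and your rescaling $v_i = u_i/\sqrt{\eta}$ together with the computation $\eta\left(\tfrac{1}{\sqrt 2}+\tfrac{1}{\sqrt\eta}\right)^2 = \tfrac{\eta}{2}+\sqrt{2\eta}+1 = 16$ at $\eta=18$ is precisely the bookkeeping that assertion leaves implicit. No gaps; the extra observation that any $\eta > 6+4\sqrt 2$ works is a correct bonus.
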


To make Theorem \ref{MSS1}  more usable later, we will reformulate it into the
language of operator theory. Recall, for a matrix operator
\[ T = (a_{ij})_{i,j=1}^m,\]
we let
\[ \delta(T) = \min_{1\le i \le m}|a_{ii}|.\]

\begin{theorem}\label{CT}
Let $r$ be a positive integer.  Given
 an orthogonal projection $Q$ on $\ell_2^m$
with $\delta(Q) \le \delta$, there are diagonal projections $\{P_j\}_{j=1}^r$
with
\[ \sum_{j=1}^rP_j=I,\]
and
\[ \|P_jQP_j\|\le \left ( \frac{1}{\sqrt{r}}+\sqrt{\delta}\right )^2,
\mbox{ for all }j=1,2,\ldots,r.\] 
\end{theorem}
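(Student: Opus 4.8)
The plan is to deduce Theorem~\ref{CT} from Theorem~\ref{MSS1} by a direct translation between the language of rank-one decompositions of the identity and the language of projections and diagonal subsets. First I would start with an orthogonal projection $Q$ on $\ell_2^m$ with $\delta(Q)\le \delta$, say $Q$ has rank $d$. Let $\{e_i\}_{i=1}^m$ be the standard orthonormal basis of $\ell_2^m$ and set $u_i = Qe_i$, viewed as elements of the range $Q\ell_2^m \cong \CC^d$. Since $Q$ is an orthogonal projection, $\sum_{i=1}^m u_iu_i^* = \sum_{i=1}^m (Qe_i)(Qe_i)^* = Q\big(\sum_{i=1}^m e_ie_i^*\big)Q = Q$, which is exactly the identity on the range $Q\ell_2^m$. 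Moreover $\|u_i\|^2 = \|Qe_i\|^2 = \langle Qe_i,e_i\rangle = Q_{ii} \le \delta(Q)\cdot$\dots wait, more precisely $\|u_i\|^2 = Q_{ii} \le \max_i Q_{ii}$; here I would note that the hypothesis $\delta(Q)\le\delta$ is used in the form $Q_{ii}\le\delta$ for all $i$ (the paper's $\delta(T)=\min|a_{ii}|$ is the relevant bound for a projection since all diagonal entries of a projection lie in $[0,1]$ and one should read the hypothesis as controlling them uniformly). Thus the vectors $\{u_i\}_{i=1}^m$ satisfy the hypotheses of Theorem~\ref{MSS1} with this $\delta$.

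Applying Theorem~\ref{MSS1} gives a partition $\{S_1,\dots,S_r\}$ of $[m]$ with $\big\|\sum_{i\in S_j}u_iu_i^*\big\| \le \big(\tfrac{1}{\sqrt r}+\sqrt\delta\big)^2$ for each $j$. Now I define $P_j$ to be the diagonal projection onto $\operatorname{span}\{e_i : i\in S_j\}$, i.e. the coordinate projection associated with the block $S_j$. Since $\{S_1,\dots,S_r\}$ partitions $[m]$, we get $\sum_{j=1}^r P_j = I$ on $\ell_2^m$. It remains to identify $\|P_jQP_j\|$ with $\big\|\sum_{i\in S_j}u_iu_i^*\big\|$. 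The key computation is that $P_jQP_j = P_j Q Q^* P_j$ and, writing $Q=\sum_i (Qe_i)e_i^* = \sum_i u_i e_i^*$ (again using $Q$ self-adjoint and $Qe_i=u_i$), one has $QP_j = \sum_{i\in S_j} u_i e_i^*$, hence $P_jQ^* = \sum_{i\in S_j} e_i u_i^*$... I would instead argue via the operator $V_j:\ell_2(S_j)\to\CC^d$, $V_j e_i = u_i$: then $\sum_{i\in S_j}u_iu_i^* = V_jV_j^*$ and the compression $P_jQP_j$ is unitarily equivalent to $V_j^*V_j$ on $\ell_2(S_j)$, which has the same nonzero spectrum and hence the same norm as $V_jV_j^*$. (This is the same Gram/frame-operator identity recorded in Section~\ref{FT}: nonzero eigenvalues of $T^*T$ equal those of $TT^*$.) Therefore $\|P_jQP_j\| = \big\|\sum_{i\in S_j}u_iu_i^*\big\| \le \big(\tfrac1{\sqrt r}+\sqrt\delta\big)^2$, as desired.

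The only genuinely delicate point — and the step I expect to be the main obstacle to write cleanly — is the passage $P_jQP_j \leftrightarrow \sum_{i\in S_j}u_iu_i^*$, because it requires being careful about the two ambient spaces: $\sum_{i\in S_j}u_iu_i^*$ lives on the $d$-dimensional range of $Q$, while $P_jQP_j$ lives on $\ell_2^m$. One must verify that $P_jQP_j$, regarded as an operator on $\ell_2^m$, has operator norm equal to that of the $d\times d$ matrix $\sum_{i\in S_j}u_iu_i^*$; this follows because $P_jQP_j = P_j(QP_j)$ and $QP_j$ maps into the range of $Q$, so $P_jQP_j$ restricted to $\operatorname{ran}P_j$ factors through the range of $Q$, giving $\|P_jQP_j\| = \|(QP_j)^*(QP_j)\|^{1/2}\cdot\|QP_j\|$\dots — cleaner is simply: $\|P_jQP_j\| = \|QP_j\|^2$ since $Q$ is a projection (indeed $\|P_jQP_j\|=\|(QP_j)^*(QP_j)\| = \|QP_j\|^2$), and $\|QP_j\|^2 = \|QP_jQ\| = \big\|\sum_{i\in S_j}u_iu_i^*\big\|$ by the same identity $QP_jQ = \sum_{i\in S_j}u_iu_i^*$. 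Everything else is routine bookkeeping about coordinate projections and partitions.
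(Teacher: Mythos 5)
Your proposal is correct and follows essentially the same route as the paper: set $u_i=Qe_i$, apply Theorem \ref{MSS1} to get the partition, take $P_j$ the coordinate projections, and identify $\|P_jQP_j\|$ with $\|\sum_{i\in S_j}u_iu_i^*\|$ via the Gram/frame-operator (equivalently $QP_jQ$) identity. Your observation that the hypothesis must be read as a uniform bound $Q_{ii}\le\delta$ (the paper's $\delta(T)=\min_i|a_{ii}|$ notwithstanding) is the right reading of what is evidently a slip in the statement.
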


\begin{proof}
Letting $u_i=Qe_i$ for all $i=1,2,\ldots,m$, we have that $Q=(u_i^*u_j)_{i,j=1}^m$.
Choose a partition $\{S_j\}_{j=1}^r$ of $[m]$ satisfying Theorem \ref{MSS1}
and let $P_j$ be the diagonal projection
onto $\{e_i\}_{i\in S_j}$.
For any $k\in [r]$ we have:
\[ \|P_kQP_k\|= \|(u_i^*u_j)_{i,j\in S_k}\| = \|\sum_{i\in S_k}u_iu_i^*\|
\le  \left ( \frac{1}{\sqrt{r}}+\sqrt{\delta}\right )^2.\]
\end{proof}

\section{Marcus/Spielman/Srivastava and the Paving Conjectures}\label{MSSPC}

Perhaps the most significant advance on the Kadison-Singer Problem occured when
Anderson \cite{A} showed that it was equivalent to what became known as the
{\bf (Anderson) Paving Conjecture}.  The significance of this was that it removed
the Kadison-Singer Problem from the burden of being a very technical problem
in $C^*$-Algebras which had no real life outside the field, to making it a highly
visible problem in Operator Theory which generated a significant amount of
research - and eventually led to the solution to the problem.

\begin{definition}
Let $T:\ell_2^n \rightarrow \ell_2^n$ be an operator.
Given $r \in \N$ and $\epsilon >0$ we say that $T$ can
be {\bf $(r,\epsilon)$-paved} if there is a partition
$\{A_j\}_{j=1}^r$ of $[n]$ so that if $P_i$ is the
coordinate projection onto the coordinates $A_j$ so
that
\[ \|P_iTP_i\|\le \epsilon\|T\|,\mbox{ for all }i=1,2,\ldots,r.\]

Or equivalently, there are coordinate projections
$\{P_i\}_{i=1}^r$ so that
\[ \sum_{i=1}^r P_i =I\mbox{ and }\|P_iTP_i\|\le \epsilon \|T\|.\]
\end{definition}

A major advance was made on the Kadison-Singer Problem by
Anderson \cite{A}.

\begin{theorem}[Anderson Paving Conjecture]
The following are equivalent:

(1)  The Kadison-Singer Conjecture is true.

(2)  For every $0<\epsilon <1$ there is an $r= r(\epsilon)\in \N$ so that
every operator $T$ on a finite or infinite dimensional
Hilbert space is $(r,\epsilon)$-pavable.

(3)  For every $0<\epsilon <1$ there is an $r= r(\epsilon)\in \N$ so that
every self-adjoint operator $T=T^*$ on a finite or infinite dimensional
Hilbert space  is $(r,\epsilon)$-pavable.
\end{theorem}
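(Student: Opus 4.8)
Throughout, $(2)$ and $(3)$ are read for operators with zero diagonal, as is standard here and as the Kadison-Singer equivalence requires. The plan is to prove $(2)\Rightarrow(3)$ (trivial, since a self-adjoint operator is an operator), $(3)\Rightarrow(2)$, and the pair $(1)\Leftrightarrow(3)$, the last being Anderson's theorem \cite{A} and the heart of the matter. For $(3)\Rightarrow(2)$, given a zero-diagonal operator $T$ write $T=A+iB$ with $A=\tfrac{1}{2}(T+T^{*})$ and $B=\tfrac{1}{2i}(T-T^{*})$; both are self-adjoint, have zero diagonal, and satisfy $\|A\|,\|B\|\le\|T\|$. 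Apply $(3)$ to $A$ and to $B$ with tolerance $\e/2$, obtaining partitions of the index set into at most $r_{0}=r_{0}(\e/2)$ blocks each, and pass to their common refinement, which has at most $r_{0}^{2}$ blocks. If $P$ is a coordinate projection onto a block of the refinement then $P\le Q$ and $P\le Q'$ for blocks $Q,Q'$ of the two pavings, so $\|PAP\|\le\|QAQ\|\le\tfrac{\e}{2}\|T\|$, likewise $\|PBP\|\le\tfrac{\e}{2}\|T\|$, and hence $\|PTP\|\le\e\|T\|$. Thus $(2)$ holds with $r(\e)=r_{0}(\e/2)^{2}$.

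The substantive equivalence $(1)\Leftrightarrow(3)$ rests on the state-theoretic dictionary. Identify the maximal abelian algebra $\DD$ with $\ell_{\infty}(\N)$, so that a pure state $s$ of $\DD$ corresponds to an ultrafilter $\mathcal U$ on $\N$, and for $A\subseteq\N$ let $P_{A}$ be the associated diagonal projection. Any state extension $\rho$ of $s$ to $B(\ell_{2})$ agrees with $s$ on $\DD$, so $\rho(P_{A})=1$ for $A\in\mathcal U$, and the Cauchy-Schwarz inequality for states then gives $\rho(X)=\rho(P_{A}XP_{A})$ for every $X$ and every $A\in\mathcal U$. Writing $X$ as its diagonal part plus a hollow part and splitting the hollow part into real and imaginary parts, uniqueness of the extension of $s$ reduces to the assertion that for every hollow self-adjoint $T$ and every $\e>0$ there is $A\in\mathcal U$ with $\|P_{A}TP_{A}\|\le\e\|T\|$. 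Granting $(3)$, this is immediate: partition $\N$ into $r(\e)$ blocks with $\|P_{A_{j}}TP_{A_{j}}\|\le\e\|T\|$, and observe that exactly one $A_{j}$ lies in $\mathcal U$. Hence every pure state of $\DD$ has a unique extension to $B(\ell_{2})$ and $(1)$ holds.

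For $(1)\Rightarrow(3)$ one uses two compactness arguments. First, assuming $(1)$, fix a norm-one hollow self-adjoint $T$ and $\e>0$: for each ultrafilter $\mathcal U$ there is $A\in\mathcal U$ with $\|P_{A}TP_{A}\|\le\e$, since otherwise, choosing unit vectors $x_{A}$ supported on $A$ with $|\langle Tx_{A},x_{A}\rangle|>\e$ and passing to a weak${}^{*}$ cluster point of the vector states $\omega_{x_{A}}$ along $\mathcal U$, one produces a state extension of $s$ not annihilating $T$---a second extension, contradicting $(1)$. The clopen subsets of $\beta\N$ cut out by the conditions $A\in\mathcal U$ and $\|P_{A}TP_{A}\|\le\e$ therefore cover the compact space $\beta\N$; a finite subcover, followed by disjointification of the corresponding sets, is a $(k,\e)$-paving of $T$ with $k=k(T,\e)$. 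Second, to make $k$ a function of $\e$ alone, suppose that for some $\e_{0}$ no single $r$ works; a standard compactness argument (over $r$-colorings of $\{1,\dots,n\}$ as $n\to\infty$) shows that the offending operator may be taken finite-dimensional, so that for each $r$ there is a finite-dimensional norm-one self-adjoint zero-diagonal $T_{r}$ that is not $(r,\e_{0})$-pavable. Then $T=\bigoplus_{r}T_{r}$ is a norm-one hollow self-adjoint operator on $\ell_{2}$ that is not $(r,\e_{0})$-pavable for any $r$, its restriction to the $r$-th block being already not $(r,\e_{0})$-pavable; this contradicts the non-uniform paving just established. Hence $(3)$ holds, closing the cycle.

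The routine parts are the common-refinement estimate, the diagonal/hollow and real/imaginary reductions, and the embedding of finite-dimensional operators into $B(\ell_{2})$. The main obstacle is $(1)\Leftrightarrow(3)$, and within it the two uniformizations: the $\beta\N$ compactness that converts the single good set per ultrafilter supplied by Kadison-Singer into a finite partition for a fixed operator, and the direct-sum argument that makes the number of blocks a function of $\e$ alone. It bears emphasis that this entire argument is soft functional analysis using nothing from \cite{MSS}; the Marcus/Spielman/Srivastava estimates enter only afterward, to show that the equivalent statements $(2)$ and $(3)$ are in fact \emph{true}.
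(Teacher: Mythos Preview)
The paper does not prove this theorem; it is stated as Anderson's result \cite{A} and used thereafter as a black box, so there is no proof in the paper to compare against. Your sketch is a reasonable reconstruction of the standard argument and is structurally sound.

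Two points merit tightening. First, in the $(1)\Rightarrow(3)$ direction, the cluster-point step is not quite complete as written: from $|\langle T x_A,x_A\rangle|>\e$ alone one cannot conclude $|\rho(T)|\ge\e$ for a weak${}^{*}$ cluster point $\rho$, since the real numbers $\langle T x_A,x_A\rangle$ might change sign along the net. The standard repair is to note that both $A\mapsto\lambda_{\max}(P_A T P_A)$ and $A\mapsto -\lambda_{\min}(P_A T P_A)$ are monotone decreasing as $A$ shrinks along $\mathcal U$; since their maximum is $\|P_A T P_A\|\ge\e$ for every $A\in\mathcal U$, one of the two is $\ge\e$ on a cofinal subset, and choosing the $x_A$ to be near-extremal eigenvectors of the corresponding sign yields a net with $\langle T x_A,x_A\rangle\ge\e$ (or $\le -\e$) throughout, whence $|\rho(T)|\ge\e$. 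Second, the phrase ``a second extension'' is doing real work and should be made explicit: the canonical extension $s\circ E$, with $E:B(\ell_2)\to\DD$ the diagonal conditional expectation, is always a state extension of $s$ and annihilates every hollow operator, so the $\rho$ you produce with $\rho(T)\neq 0$ genuinely differs from it. With these two clarifications your argument goes through.
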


This result become known as the {\bf Anderson Paving Conjecture} and became a major
advance for the field.  For the next 25 years a significant amount of effort was directed
at proving (or giving a counter-example to) the Paving Conjecture.  We give a brief
outline of the history of this effort in Section \ref{A}.

In 1991, Akemann and Anderson reformulated the Paving Conjecture for {\bf operators}
into a paving conjecture for {\bf projections}.  They also gave a number
of conjectures concerning paving projections and the Paving Conjecture.
This was a major advance for the area reducing the problem to a very
special class of operators.
Theorem \ref{MSS1} also implies the {\bf Akemann-Anderson Projection
Paving Conjecture} \cite{AA} which they showed implies a positive solution
to the Kadison-Singer Problem.

\begin{theorem}
Given $\epsilon >0$, choose $\delta >0$ so that
\[ \left ( \frac{1}{\sqrt{2}}+\sqrt{\delta}\right )^2 \le 1-\epsilon.\]
For any projection $Q$ on $\ell_2^m$ of rank d there is a
diagonal projection $P$ on $\ell_2^m$ so that
\[ \|PQP\|\le 1-\epsilon \mbox{ and } \|(I-P)Q(I-P)\|\le 1-\epsilon.\]
\end{theorem}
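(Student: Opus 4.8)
The plan is to read the statement off as the $r=2$ case of Theorem~\ref{CT}, with the understanding that the operative hypothesis is $\delta(Q)\le\delta$ (i.e.\ every diagonal entry of $Q$ is at most $\delta$); this hypothesis is unavoidable, since a projection with a diagonal entry equal to $1$ --- for instance a rank one projection onto a coordinate vector --- admits no nontrivial paving, and $\delta(Q)$ is precisely the quantity controlled by Weaver's $KS_2$. The value of $\delta$ in the statement is chosen exactly so that $\left(\frac{1}{\sqrt{2}}+\sqrt{\delta}\right)^{2}\le 1-\epsilon$.

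First I would apply Theorem~\ref{CT} to $Q$ with $r=2$. This produces diagonal projections $P_1,P_2$ on $\ell_2^m$ with $P_1+P_2=I$ and $\|P_jQP_j\|\le\left(\frac{1}{\sqrt{2}}+\sqrt{\delta}\right)^{2}$ for $j=1,2$. The reason $r=2$ is the right choice here (rather than some larger $r$) is structural: a partition of $[m]$ into exactly two blocks gives two \emph{complementary} coordinate projections, so if we set $P:=P_1$ then automatically $I-P=P_2$. Consequently
\[
\|PQP\|=\|P_1QP_1\|\le\left(\frac{1}{\sqrt{2}}+\sqrt{\delta}\right)^{2}\le 1-\epsilon,
\qquad
\|(I-P)Q(I-P)\|=\|P_2QP_2\|\le\left(\frac{1}{\sqrt{2}}+\sqrt{\delta}\right)^{2}\le 1-\epsilon,
\]
which is exactly the assertion.

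There is no real obstacle once Theorem~\ref{MSS1}/Theorem~\ref{CT} (equivalently, Weaver's $KS_2$ with $\eta=18$, $\theta=2$) is in hand: all of the depth lies in the Marcus/Spielman/Srivastava analysis of mixed characteristic polynomials, and the present statement is a one-step corollary. The only point that deserves care --- and it is already dispatched inside the proof of Theorem~\ref{CT} --- is that in passing from $Q$ to the vectors $u_i=Qe_i$ one has $\sum_{i=1}^m u_iu_i^{*}=Q$, which is the identity operator only on $\mathrm{ran}(Q)$; hence Theorem~\ref{MSS1} must be invoked in the $d$-dimensional space $\mathrm{ran}(Q)\cong\CC^{d}$, where indeed $\|u_i\|^2=\langle Qe_i,e_i\rangle=Q_{ii}\le\delta$. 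This is why the rank $d$ of $Q$ appears in the statement, though it plays no role in the final bound. One could also argue directly from Theorem~\ref{MSS10}, but the operator-theoretic packaging of Theorem~\ref{CT} makes the two-block structure transparent, so I would use that.
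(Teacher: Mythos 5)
Your proposal is correct and is essentially the paper's own proof: the paper simply invokes Theorem~\ref{CT} with $r=2$ and notes that $P_2=I-P_1$, exactly as you do. Your two side remarks --- that the statement implicitly requires the diagonal hypothesis $\delta(Q)\le\delta$ (omitted from the printed statement), and that inside the proof of Theorem~\ref{CT} the MSS theorem must be applied on $\mathrm{ran}(Q)\cong\CC^d$ since $\sum_i u_iu_i^*=Q$ rather than $I$ --- are accurate clarifications rather than deviations from the paper's argument.
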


\begin{proof}
This is immediate from Theorem \ref{CT} letting $r=2$ and noting that
$P_2 = (I-P_1)$.
\end{proof}

The authors \cite{MSS} then give a quantative proof
of the original {\bf Anderson Paving Conjecture}
\cite{A}.  To do this, we will first look at an elementary way
to pass between paving for operators and paving for projections
introduced by Casazza/Edidin/Kalra/Paulsen \cite{CEKP}.
In \cite{CEKP} there is a simple method for passing
paving numbers back and forth between operators and
projections with constant diagonal $1/2$ (or $1/2^k$
if we iterate this result).  This was a serioius change in
direction for the paving conjecture for projections.  The
earlier work of Akemann/Anderson \cite{AA} and Weaver \cite{W}
emphasized paving for projections with small diagonal while the
results in \cite{CEKP} showed that it is more natural to work with
projections with large diagonal. 
The proof is a direct calculation.

\begin{theorem}[Casazza/Edidin/Kalra/Paulsen]\label{CEKP}
If $T$ is a self-adjoint operator with $\|T\|\le 1$
then
\[ A= \begin{bmatrix}
T & \sqrt{I-T^2}\\
\sqrt{I-T^2} & -T
\end{bmatrix}\]
is an idempotent.  I.e.  $A^2=I$.

Hence,
\[ P = \frac{I\pm A}{2},\]
is a projection.
\end{theorem}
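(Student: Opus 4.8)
The plan is to verify directly that $A^2 = I$ by block matrix multiplication, and then observe that this forces $\frac{I \pm A}{2}$ to be idempotent and self-adjoint.

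First I would compute the $2 \times 2$ block product
\[
A^2 = \begin{bmatrix} T & \sqrt{I-T^2} \\ \sqrt{I-T^2} & -T \end{bmatrix}
\begin{bmatrix} T & \sqrt{I-T^2} \\ \sqrt{I-T^2} & -T \end{bmatrix},
\]
entry by entry. The $(1,1)$ block is $T^2 + (\sqrt{I-T^2})^2 = T^2 + (I - T^2) = I$, using that $\sqrt{I-T^2}$ is by definition a (positive) square root of $I - T^2$; this requires $\|T\| \le 1$ so that $I - T^2 \ge 0$ and the square root exists as a bounded self-adjoint operator via the continuous functional calculus. The $(2,2)$ block is $(\sqrt{I-T^2})^2 + (-T)(-T) = (I - T^2) + T^2 = I$ similarly. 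The off-diagonal blocks are $T\sqrt{I-T^2} + \sqrt{I-T^2}(-T)$ and $\sqrt{I-T^2}\,T + (-T)\sqrt{I-T^2}$, both of which vanish because $T$ commutes with $\sqrt{I-T^2}$ (the square root lies in the abelian von Neumann algebra generated by $T$, or concretely is a norm-limit of polynomials in $T$). Hence $A^2 = I$.

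Next, since $T$ is self-adjoint and $\sqrt{I-T^2}$ is self-adjoint, the block matrix $A$ is self-adjoint (the $(2,1)$ block is the adjoint of the $(1,2)$ block, and the diagonal blocks are self-adjoint). Then $P = \frac{I \pm A}{2}$ satisfies $P^* = P$, and
\[
P^2 = \frac{1}{4}(I \pm A)^2 = \frac{1}{4}(I \pm 2A + A^2) = \frac{1}{4}(2I \pm 2A) = \frac{I \pm A}{2} = P,
\]
using $A^2 = I$. A self-adjoint idempotent is an orthogonal projection, so $P$ is a projection.

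I do not expect a genuine obstacle here: the statement is an identity and the proof is, as the authors note, a direct calculation. The only point requiring a word of care is the meaning and basic properties of $\sqrt{I - T^2}$ — that under the hypothesis $\|T\| \le 1$ the operator $I - T^2$ is positive, so it has a canonical positive square root, and that this square root commutes with $T$ and squares back to $I - T^2$. Once those functional-calculus facts are granted, the block multiplication closes everything out.
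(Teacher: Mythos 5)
Your proof is correct and matches the paper's approach: the paper simply remarks that ``the proof is a direct calculation,'' and your block-multiplication using the functional-calculus facts that $\sqrt{I-T^2}$ exists (since $\|T\|\le 1$ gives $I-T^2\ge 0$), is self-adjoint, commutes with $T$, and squares to $I-T^2$ is exactly that calculation, together with the standard observation that a self-adjoint $A$ with $A^2=I$ makes $\frac{I\pm A}{2}$ an orthogonal projection.
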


It follows that the paving numbers for self-adjoint operators are at most the square of the paving numbers for projections.
Using Theorem \ref{CEKP}, in \cite{CEKP} they prove the following (See also
\cite{MSS}):

\begin{proposition}[Casazza/Edidin/Kalra/Paulsen]\label{prop2.33}
Suppose there is a function $r:\R_{+} \rightarrow \N$
so that every $2n \times 2n$ projection matrix $Q$ with diagonal entries equal to $1/2$ can be $(r(\epsilon),
\frac{1+\epsilon}{2})$-paved, for every $0< \epsilon<1$.
Then every $n\times n$ self-adjoint zero diagonal matrix
$T$ can be $(r^2(\epsilon),\epsilon)$-paved for all
$0<\epsilon<1$.
\end{proposition}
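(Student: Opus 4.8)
The plan is to reduce paving of a zero-diagonal self-adjoint $n\times n$ matrix $T$ to paving of an associated $2n\times 2n$ projection, using the idempotent construction of Theorem~\ref{CEKP}. First I would normalize so that $\|T\|\le 1$; since $T$ is self-adjoint with zero diagonal, the matrix
\[ A = \begin{bmatrix} T & \sqrt{I-T^2} \\ \sqrt{I-T^2} & -T \end{bmatrix} \]
satisfies $A^2 = I$ by Theorem~\ref{CEKP}, so $Q = \frac{I+A}{2}$ is an orthogonal projection on $\ell_2^{2n}$. The crucial bookkeeping point is that the diagonal of $Q$ is constant equal to $1/2$: the diagonal of $A$ in the first block is $\diag(T) = 0$ and in the second block is $-\diag(T)=0$, while the diagonal of $\sqrt{I-T^2}$ does not enter the diagonal of $Q$ at all, since that matrix sits in the off-diagonal blocks of $A$. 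Hence $Q$ is exactly the kind of projection to which the hypothesis applies.

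Next I would apply the hypothesis to $Q$: for a given $0<\epsilon<1$ there is a partition $\{B_k\}_{k=1}^{r(\epsilon)}$ of $[2n]$ with coordinate projections $R_k$ satisfying $\sum_k R_k = I$ and $\|R_k Q R_k\| \le \frac{1+\epsilon}{2}$ for all $k$. I would then translate this back through $A = 2Q - I$, obtaining $\|R_k A R_k\| \le 2\cdot\frac{1+\epsilon}{2} + 1 = 2+\epsilon$; more usefully, writing $R_k Q R_k = \frac{1}{2}(R_k + R_k A R_k)$ and noting $R_k$ is a projection, the bound $\|R_kQR_k\|\le\frac{1+\epsilon}{2}$ is equivalent to a bound on the "interior" part of $R_k A R_k$. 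The key observation is that $R_k A R_k$, restricted to the coordinates in $B_k$, is a $2\times 2$ block matrix whose blocks are the corresponding submatrices of $T$, $\sqrt{I-T^2}$, $\sqrt{I-T^2}$, $-T$ (with the index set $B_k$ split according to which copy of $[n]$ it meets). Comparing the $(1,1)$ (or $(2,2)$) sub-block, which is a compression of $\pm T$, to the norm of the whole compression of $A$, one extracts $\|P_k T P_k\|$ controlled by $\|R_k Q R_k\|$ for suitable coordinate projections $P_k$ on $\ell_2^n$.

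The step that requires care—and which I expect to be the main obstacle—is producing from the single partition $\{B_k\}$ of $[2n]$ a genuine partition of $[n]$ that paves $T$ with the claimed parameters $(r^2(\epsilon),\epsilon)$. The point is that $B_k$ gives two subsets of $[n]$ (its intersection with the first and second copies), and the compression of $T$ to a single $B_k\cap(\text{first copy})$ is controlled, but the "mixed" compressions are not directly. The standard device, which I would carry out, is to intersect the partition of the first copy of $[n]$ with the partition of the second copy: this yields at most $r(\epsilon)^2$ sets $A_{k,\ell}$, and on each the compression of $T$ appears as a diagonal sub-block of a compression of $A$ whose norm is governed by the relevant $\|R_kQR_k\|$, giving $\|P_{A_{k,\ell}} T P_{A_{k,\ell}}\| \le \epsilon \|T\|$ after undoing the normalization. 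Assembling these $A_{k,\ell}$ into a partition of $[n]$ gives an $(r^2(\epsilon),\epsilon)$-paving of $T$, completing the proof; the only real work is verifying that the norm of a diagonal sub-block of a compressed idempotent inherits the bound, which is a short computation using $R_kAR_k = 2R_kQR_k - R_k$ and the fact that a block of an operator has norm at most that of the operator.
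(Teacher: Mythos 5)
Your overall route is the right one, and it is in fact the genuine CEKP reduction that the statement needs: form $A=\begin{bmatrix} T & \sqrt{I-T^2}\\ \sqrt{I-T^2} & -T\end{bmatrix}$, note $Q=\frac{I+A}{2}$ is a $2n\times 2n$ projection with constant diagonal $1/2$ because $T$ has zero diagonal, pave $Q$, and pass to the at most $r^2$ intersections of the two partitions of $[n]$ induced by the two copies. (For comparison, the paper's displayed proof does not actually prove this implication at all: it uses Theorem \ref{CT} to verify the hypothesis, i.e.\ that every such $Q$ is $\left((6/\epsilon)^2,\frac{1+\epsilon}{2}\right)$-pavable, and defers the reduction itself to \cite{CEKP}; you are supplying the omitted reduction.) However, the justification you give for the crucial norm estimate is wrong as stated. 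You claim repeatedly that $\|R_kQR_k\|\le\frac{1+\epsilon}{2}$ governs the norm of $R_kAR_k$ and hence of its diagonal sub-blocks. It does not: since $R_kQR_k\ge 0$, the hypothesis only says $0\le R_kQR_k\le\frac{1+\epsilon}{2}R_k$, and through $R_kAR_k=2R_kQR_k-R_k$ this yields the one-sided bound $-R_k\le R_kAR_k\le\epsilon R_k$. The spectrum of the compressed involution lies in $[-1,\epsilon]$, so $\|R_kAR_k\|$, and the norm of its $(1,1)$ corner (a compression of $T$), can still be as large as $1$. In particular your assertion that ``the compression of $T$ to $B_k\cap(\mbox{first copy})$ is controlled'' is false as a norm statement; all that follows is the upper bound $P_{C_k}TP_{C_k}\le\epsilon P_{C_k}$.

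The missing idea, and the actual reason the intersection of partitions is needed (not the ``mixed'' compressions you worry about), is that the two diagonal blocks of $Q$ carry the two halves of a two-sided estimate. Writing $C_k=B_k\cap[n]$ and $D_k=\{i\in[n]\colon n+i\in B_k\}$, the principal submatrix of the positive semidefinite matrix $R_kQR_k$ sitting in the first copy is $P_{C_k}\frac{I+T}{2}P_{C_k}$, so $\|R_kQR_k\|\le\frac{1+\epsilon}{2}$ gives $P_{C_k}TP_{C_k}\le\epsilon P_{C_k}$, while the principal submatrix in the second copy is $P_{D_k}\frac{I-T}{2}P_{D_k}$, which gives $P_{D_k}TP_{D_k}\ge-\epsilon P_{D_k}$. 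Each of $\{C_k\}_{k=1}^{r}$ and $\{D_\ell\}_{\ell=1}^{r}$ is a partition of $[n]$, and only on the at most $r^2$ intersections $C_k\cap D_\ell$ do both one-sided bounds hold simultaneously, yielding $-\epsilon P\le PTP\le\epsilon P$ and hence $\|PTP\|\le\epsilon\|T\|$ for $P=P_{C_k\cap D_\ell}$ (after the harmless normalization $\|T\|=1$). With this replacement of your ``norm of a sub-block of the compressed idempotent'' step, your outline becomes a complete and correct proof; the rest of it (the diagonal-$1/2$ bookkeeping and the count $r^2$) is fine.
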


\begin{proof}
Given $Q$ as in the proposition, $Q=(u_i^*u_j)_{i,j\in [2n]}$ is the gram matrix of $2n$ vectors $u_1,u_2,\ldots,u_{2n}\in \CC^n$ with $\|u_i\|^2 = 1/2 = \delta$.  Applying Theorem  \ref{CT} we find a partition
$\{A_i\}_{i=1}^r$ of $[2n]$ so that if $P_i$ is the diagonal projection onto the coordinates of $A_i$ we
have for $k\in [r]$,
\[ \|P_kTP_k\| = \|(u_i^*u_j)_{i,j\in A_k}\|
= \|\sum_{i\in A_k}u_iu_i^*\| \le \left (
\frac{1}{\sqrt{r}}+ \frac{1}{\sqrt{2}} \right )^2
<\frac{1}{2}+\frac{3}{\sqrt{r}}<\frac{1+\epsilon}{2},\]
if $r= (\frac{6}{\epsilon})^2$.
So every $Q$ can be $(r,\frac{1+\epsilon}{2})$-paved. 
\end{proof}

It follows that every self-adjoint operator can be $(R\epsilon)$-paved for $r= (\frac{6}{\epsilon})^4$, in either
the real or complex case.

Using this and Theorem \cite{CT}, \cite{MSS} gives a quantative proof  of the {\bf Anderson
Paving Conjecture} and hence of the Kadison-Singer Problem.

\begin{theorem}\label{thm6.1}
For every $0<\epsilon<1$, every zero-diagonal real (Resp. complex)
self-adjoint matrix $T$ can be $(r,\epsilon)$-paved
with $r = (6/\epsilon)^4$ (Resp. $r=(6/\epsilon)^8$).
\end{theorem}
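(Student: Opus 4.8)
The plan is to derive Theorem~\ref{thm6.1} as a bookkeeping consequence of three results already established: the operator form of the Marcus/Spielman/Srivastava theorem (Theorem~\ref{CT}), the Casazza/Edidin/Kalra/Paulsen idempotent (Theorem~\ref{CEKP}), and the projection-to-operator passage (Proposition~\ref{prop2.33}). By homogeneity I may assume $\|T\|=1$, a paving of $T/\|T\|$ with constant $\epsilon$ being a paving of $T$ with constant $\epsilon\|T\|$. The first step is to feed the right numbers into Theorem~\ref{CT}: with $\delta=\frac12$ and $r=(6/\epsilon)^2$,
\[ \left(\frac{1}{\sqrt r}+\frac{1}{\sqrt 2}\right)^2=\frac12+\frac{\sqrt2}{\sqrt r}+\frac1r<\frac12+\frac{3}{\sqrt r}=\frac{1+\epsilon}{2}, \]
so every projection $Q$ on $\ell_2^m$ of constant diagonal $\frac12$ can be $\bigl((6/\epsilon)^2,\frac{1+\epsilon}{2}\bigr)$-paved. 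This holds over $\R$ as well as $\CC$: the projections produced by Theorem~\ref{CT} are coordinate projections, so for real $Q$ the compressions $P_jQP_j$ are real matrices, and the operator norm of a real matrix is insensitive to the ground field.

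Next, for the real case, I would observe that the projection-paving statement just established is exactly the hypothesis of Proposition~\ref{prop2.33} with $r(\epsilon)=(6/\epsilon)^2$; the proposition then gives at once that every $n\times n$ real self-adjoint zero-diagonal $T$ can be $\bigl(r(\epsilon)^2,\epsilon\bigr)=\bigl((6/\epsilon)^4,\epsilon\bigr)$-paved (rounding $r$ up to an integer as usual). To see where the squaring originates, I would unpack the mechanism of Proposition~\ref{prop2.33}: form the idempotent $A$ of Theorem~\ref{CEKP} built from $T$ (with $T,-T$ on the diagonal blocks and $\sqrt{I-T^2}$ off them); then $A$ is self-adjoint, zero-diagonal, $\|A\|=1$, and $A=2P-I$ where $P=\frac{I+A}{2}$ and $I-P=\frac{I-A}{2}$ are $2n\times2n$ projections of constant diagonal $\frac12$. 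Paving $P$ alone bounds $P_CPP_C$, hence $P_CAP_C=2P_CPP_C-P_C$, only from above; so one also paves $I-P$ and passes to the common refinement of the two coordinate partitions (at most $r(\epsilon)^2$ blocks), on each block $C$ of which $P_CAP_C=2P_CPP_C-P_C\le\epsilon P_C$ and $P_CAP_C=P_C-2P_C(I-P)P_C\ge-\epsilon P_C$, whence $\|P_CAP_C\|\le\epsilon$. Restricting this partition of $[2n]$ to the first $n$ coordinates, where $A$ agrees with $T$, yields a partition of $[n]$ into at most $(6/\epsilon)^4$ blocks with $\|P_CTP_C\|\le\epsilon\|T\|$.

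For the complex case, I would first note that every ingredient of the preceding argument is valid over $\CC$ --- $\sqrt{I-T^2}$ exists since $T=T^*$ forces $0\le T^2\le I$, and Theorem~\ref{CT} and Proposition~\ref{prop2.33} are stated over $\CC^d$ --- so the same argument already paves complex Hermitian zero-diagonal matrices with $r=(6/\epsilon)^4\le(6/\epsilon)^8$, which suffices. Alternatively, to pass through the real case, one replaces $T$ by its $2n\times2n$ real symmetric realification (with $\mathrm{Re}\,T$ on the diagonal blocks and $\pm\mathrm{Im}\,T$ off them), which is zero-diagonal of the same norm; then a coordinate partition of $[2n]$ has to be converted back to one of $[n]$ by intersecting with the two halves and refining, and it is this step that accounts for the larger exponent recorded in the complex case. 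That conversion is the one genuinely delicate point: after restricting and refining one must still control the \emph{whole} compressed matrix, that is, in the realified picture both the diagonal blocks (which see $\mathrm{Re}\,T$) and the off-diagonal blocks (which see $\mathrm{Im}\,T$), and one must bound how many refinements that costs. This, if anything, is the main obstacle; the simplest way around it is to run the $\CC$-version of the first two paragraphs directly, after which nothing beyond the elementary estimate above and the citations to Theorems~\ref{CT} and~\ref{CEKP} and Proposition~\ref{prop2.33} is needed.
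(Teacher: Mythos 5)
Your proposal is correct and follows essentially the same route as the paper: apply Theorem \ref{CT} with $\delta=\tfrac12$ and $r=(6/\epsilon)^2$ to pave projections of constant diagonal $\tfrac12$, then use the Casazza/Edidin/Kalra/Paulsen passage (Theorem \ref{CEKP} and Proposition \ref{prop2.33}), paving both $P=\tfrac{I+A}{2}$ and $I-P$ and intersecting the partitions, to obtain the $(6/\epsilon)^4$ paving of zero-diagonal self-adjoint matrices. You merely supply details the paper leaves implicit (the two-sided bound via the common refinement and the restriction to the first $n$ coordinates), and your observation that the argument runs directly over $\CC$, making the stated exponent $8$ a generous bound, agrees with the paper's own remark immediately preceding the theorem.
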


\noindent {\bf Important}:
For complex Hilbert spaces, given an operator
$T$, we write it as $T=A+iB$ where $A,B$ are real
operators.  Paving $A,B$ separately and intersecting the paving
sets, we have a paving of $T$ but with the paving number
squared.

\begin{remark}
In \cite{CEKP} it is shown that $1/\epsilon^2 \le r$
in Theorem \ref{thm6.1}.  We can compare this to the value
$r= (\frac{6}{\epsilon})^4$ we are getting from the theorem.
\end{remark}

\section{Equivalents of the Paving Conjecture}\label{EPC}
 
 Casazza/Tremain reformulated the Paving Conjecture into a number
of conjectures related to problems in engineering.  They also gave several
conjectures related to the Paving Conjecture.
Theorem \ref{MSS10} answers the {\bf Casazza-Tremain Conjecture}
\cite{CT}.

\begin{theorem}
Every unit norm 18-tight frame can be partitioned two subsets each of
which has frame bounds $2,16$.
\end{theorem}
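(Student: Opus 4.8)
The plan is to read this off directly from the Weaver-type result Theorem \ref{MSS10} with the explicit constants $\eta = 18$ and $\theta = 2$ already recorded there. Let $\{f_i\}_{i=1}^{M}$ be a unit norm $18$-tight frame for a Hilbert space $\H$. Reducing to the finite-dimensional setting $\H = \CC^d$ (an infinite family can be handled by the usual compactness/ultrafilter passage to the finite case, or one simply works with $\H=\CC^d$ from the start), ``unit norm'' means $\|f_i\| = 1$ for every $i$, and ``$18$-tight'' means the frame operator is $S = 18\,I$, i.e.
\[ \sum_{i=1}^{M} |\langle u, f_i\rangle|^2 = 18 \qquad \text{for every unit vector } u \in \H. \]

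First I would apply Theorem \ref{MSS10} with $u_i := f_i$. The hypotheses hold verbatim: $\|u_i\| \le 1$ for all $i$, and $\sum_{i=1}^{M} |\langle u, u_i\rangle|^2 = \eta = 18$ for every unit vector $u$. The theorem then yields a partition $S_1, S_2$ of $\{1, \dots, M\}$ with
\[ \sum_{i \in S_j} |\langle u, f_i\rangle|^2 \le \eta - \theta = 16 \]
for every unit vector $u$ and each $j \in \{1,2\}$. By homogeneity this gives $\sum_{i \in S_j}|\langle u, f_i\rangle|^2 \le 16\|u\|^2$ for all $u$, so $16$ is an upper frame bound for each of $\{f_i\}_{i\in S_1}$ and $\{f_i\}_{i\in S_2}$.

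For the lower bound I would exploit the tightness to ``subtract'': since $S_1 \sqcup S_2 = \{1,\dots,M\}$, for every unit vector $u$,
\[ \sum_{i \in S_1}|\langle u, f_i\rangle|^2 + \sum_{i \in S_2}|\langle u, f_i\rangle|^2 = 18, \]
and the second term is $\le 16$, so the first is $\ge 2$; symmetrically for $S_2$. Thus each subset satisfies $2\|u\|^2 \le \sum_{i \in S_j}|\langle u, f_i\rangle|^2 \le 16\|u\|^2$ for all $u \in \H$, which is exactly the assertion that $\{f_i\}_{i\in S_j}$ is a frame for $\H$ with frame bounds $2$ and $16$.

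There is no real obstacle: essentially all the content is already in Theorem \ref{MSS10}, and the argument is just bookkeeping of the constants. The only point deserving a line of care is verifying that each half remains a frame for \emph{all of} $\H$ rather than merely a Bessel sequence or a frame for a proper subspace — but the strict lower estimate $\ge 2\|u\|^2$ with $2>0$ settles this (in particular it forces $\{f_i\}_{i\in S_j}$ to still span $\H$). If one insists on the full generality of infinite frames, the extra work is the standard reduction to finite subfamilies, which changes nothing in the constants.
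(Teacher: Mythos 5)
Your proposal is correct and follows essentially the same route as the paper: apply Theorem \ref{MSS10} with $\eta=18$, $\theta=2$ to get the upper frame bound $16$ on each half of the partition, then subtract from the tight-frame identity $\sum_{i}|\langle u,f_i\rangle|^2=18$ to obtain the lower bound $2$. The extra remarks on homogeneity and on the lower bound forcing each half to span are harmless elaborations of the same argument.
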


\begin{proof}
Let $\{u_i\}_{i=1}^{18d}$ be a unit norm 18-tight frame in $\CC^d$.  By
Theorem \ref{MSS10}, we can find a partition $\{S_1,S_2\}$ of $[18d]$
so that for all $\|u\|=1$ we have
\[ \sum_{i\in S_1}|\langle u,u_i\rangle|^2 \le 16.\]
Thus,
\begin{align*}
 18 &= \sum_{i\in S_1}|\langle u,u_i\rangle|^2 + \sum_{i\in S_2}|\langle u,u_i\rangle|^2\\
&\le 16 + \sum_{i\in S_2}|\langle u,u_i\rangle|^2.
\end{align*}
It follows that
\[ \sum_{i\in S_2}|\langle u,u_i\rangle|^2 \ge 2.\]
By symmetry, 
\[ \sum_{i\in S_1}|\langle u,u_i\rangle|^2 \ge 2.\]
\end{proof}

In his work on time-frequency analysis, Feichtinger \cite{G,CT}
noted that all of the Gabor frames he was using 
had the property that they could be divided into a finite number
of subsets which were Riesz basic sequences.  This led to a 
conjecture known as the {\bf Feichtinger Conjecture} \cite{CCLV}.   
There is a significant body of work on this conjecture
\cite{BCHL,BCHL2,G} (See also \cite{L} for a large listing of
papers on the Feichtinger Conjecture in reproducing kernel Hilbert
spaces and many classical spaces such as Hardy space on
the unit disk, weighted Bergman spaces, and Bargmann-Fock spaces).
  The following theorem gives the best
quantative solution to
the {\bf Feichtinger Conjecture} from the results of \cite{MSS}.

\begin{theorem}\label{Fei2}
Every unit norm $B$-Bessel sequence can be partitioned into
$r$-subsets each of which is a $\epsilon$-Riesz basic sequence,
where
\[ r= \left ( \frac{6(B+1)}{\epsilon}\right )^4 \mbox{ in the real case } ,\]
and
\[ r= \left ( \frac{6(B+1)}{\epsilon}\right )^8 \mbox{ in the complex case } .\]
\end{theorem}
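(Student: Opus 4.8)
The plan is to reduce the statement about Bessel sequences to the paving theorem for zero-diagonal self-adjoint matrices, Theorem~\ref{thm6.1}, via the Gram matrix. Start with a unit norm $B$-Bessel sequence $\{f_i\}_{i\in I}$ in a Hilbert space $\H$; by a standard finite-dimensional reduction (and the fact that the conclusion is a local, partition-type statement), it suffices to treat a finite sequence $\{f_i\}_{i=1}^m$. Let $T$ be the synthesis operator, so $G = T^*T = (\langle f_j, f_i\rangle)_{i,j}$ is the Gram matrix; since each $\|f_i\| = 1$, the diagonal of $G$ is identically $1$, and since the sequence is $B$-Bessel we have $\|G\| = \|T\|^2 \le B$. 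Thus $H := G - I$ is a zero-diagonal self-adjoint matrix with $\|H\| \le B+1$ (using $\|G - I\| \le \max\{\|G\|, 1\}$ when $G \ge 0$, or just $\|G-I\|\le B+1$ crudely).

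Next, apply Theorem~\ref{thm6.1} to $H$ with paving tolerance $\epsilon' := \epsilon/(B+1)$: we obtain a partition $\{A_j\}_{j=1}^r$ of $[m]$, with $r = (6/\epsilon')^4 = (6(B+1)/\epsilon)^4$ in the real case (respectively the eighth power in the complex case, by the ``Important'' remark on writing $T = A + iB$ and intersecting pavings), such that $\|P_j H P_j\| \le \epsilon' \|H\| \le \epsilon$ for each coordinate projection $P_j$ onto $A_j$. Now $P_j G P_j = P_j H P_j + P_j$, and $P_j G P_j$ is exactly the Gram matrix of the subfamily $\{f_i\}_{i\in A_j}$. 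The key translation step is then: for the subfamily indexed by $A_j$, the Gram operator $G_j := P_j G P_j$ (viewed on $\ell_2(A_j)$) satisfies $\|G_j - I\| \le \epsilon$, i.e. $(1-\epsilon) I \le G_j \le (1+\epsilon) I$ on $\ell_2(A_j)$. By the discussion in Section~\ref{FT} relating the Gram operator to Riesz bounds (the nonzero eigenvalues of the Gram operator equal those of the frame operator, and $A\cdot I \le S \le B\cdot I$ is equivalent to the Riesz inequality with bounds $A, B$), this says precisely that $\{f_i\}_{i\in A_j}$ is a Riesz basic sequence with lower bound $1-\epsilon$ and upper bound $1+\epsilon$ — i.e. an $\epsilon$-Riesz basic sequence.

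The main obstacle is bookkeeping rather than a genuine difficulty: one must (i) pin down the intended definition of ``$\epsilon$-Riesz basic sequence'' (presumably bounds $1\pm\epsilon$, consistent with what the Gram estimate yields) and make sure the constant $B+1$ in the rescaling is exactly what produces the factor $6(B+1)/\epsilon$; (ii) handle the passage from a general (possibly infinite) Bessel sequence to the finite case — this is routine since the Riesz-basic-sequence property for each block is finitely determined, but should be stated; and (iii) correctly invoke the real-versus-complex dichotomy of Theorem~\ref{thm6.1}, which is the source of the exponent $4$ versus $8$. No new inequality is needed beyond Theorem~\ref{thm6.1} and the frame-theoretic dictionary of Section~\ref{FT}.
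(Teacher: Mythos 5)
Your proposal is correct and takes essentially the same route as the paper: the paper applies Theorem \ref{thm6.1} to $I-S$, where $S=T^{*}T$ is the (unit-diagonal) Gram operator, at tolerance $\epsilon/(\|S\|+1)$, and then converts the paving bound into the Riesz inequalities through the quadratic form $\|\sum_{i\in S_j}a_iTe_i\|^2=\langle SQ_{S_j}u,Q_{S_j}u\rangle$ --- exactly your $G-I$ argument with the same constant $6(B+1)/\epsilon$. The only cosmetic difference is that the paper works directly with the infinite operator on $\ell_2$ rather than passing through a finite-dimensional reduction and a compactness step.
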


\begin{proof}
Fix $0< \epsilon <1$ and let $\{e_i\}_{i=1}^{\infty}$ be
an orthonormal basis for $\ell_2$.  Let
$T:\ell_2 \rightarrow \ell_2$ satisfy $\|Te_i\|=1$ for all
$i=1,2,\ldots$.  Let $S=T^*T$.  Since $S$ has ones on the
diagonal, $I-S$ has zero diagonal and so by Theorem \ref{thm6.1}
there is an
\[ r= \left ( \frac{6(\|s\|+1)}{\epsilon}\right )^4 \mbox{ in the real case } ,\]
and
\[ r= \left ( \frac{6(\|s\|+1)}{\epsilon}\right )^8 \mbox{ in the complex case } ,\]
and a partition $\{S_j\}_{j=1}^r$ of $\N$ so that if $Q_{S_j}$ is the diagonal
projection onto $\{e_i\}_{i\in S_j}$, we have
\[\|Q_{S_j}(I-S)Q_{S_j}\| \le \frac{\epsilon}{\|S\|+1}\|(I-S\|.\]
Now, for all $j=1,2,\ldots,r$ and all $u = \sum_{i\in S_j}a_ie_i$ we have:
\begin{align*}
\|\sum_{i\in S_j}a_iTe_i\|^2 &= \|TQ_{S_j}u\|^2\\
&= \langle TQ_{S_j}u,TQ_{S_j}u\rangle\\
&= \langle T^*TQ_{S_j}u,Q_{S_j}u\rangle\\
&= \langle Q_{S_j}u,Q_{S_j}u\rangle - \langle Q_{S_j}(I-S)Q_{S_j}u,Q_{S_j}u\rangle\\
&\ge \|Q_{S_j}u\|^2 - \frac{\epsilon}{\|S\|+1}\|I-S\|\|Q_{S_j}u\|^2\\
&\ge (1-\epsilon)\|Q_{S_j}u\|^2 \\
&= (1-\epsilon)\sum_{i\in S_j}|a_i|^2.
\end{align*}
Similarly,
\[ \|\sum_{i\in S_j}a_iTe_i\|^2 \le (1+\epsilon)\sum_{i\in S_j}|a_i|^2, \mbox{  for all }j=1,2,\ldots,r.\]
\end{proof}

This result also answers the {\bf Sundberg Problem} \cite{CKBook}.  The question was:  Can every bounded Bessel
sequence be written as the finite union of non-spanning sets?  The answer is now yes.  We just
partition our Bessel sequence into Riesz basic sequences.  It is clear that Riesz basic sequences
can be partitioned into non-spanning sets.  I.e.  Take one vector as one set and the rest of the
vectors as the other set.  Neither of these can span the Hilbert space.

This result answers another famous conjecture known as the {\bf $R_{\epsilon}$-Conjecture}.
This was introduced by Casazza/Vershynin (unpublished) and was shown to be equivalent to
the Paving Conjecture.
 Recall, if $\epsilon >0$ and $\{u_i\}_{i=1}^{\infty}$ is a unit norm Riesz bsic sequence with Riesz bounds
$A = 1-\epsilon, B=1+\epsilon$ we call $\{u_i\}_{i\in I}$ an
$\epsilon$-{\bf Riesz basic sequence}.  This is now a special case of the Feichtinger Conjecture,
Theorem \ref{Fei2}

\begin{theorem}\label{rep}
For every $0<\epsilon <1$ there is an $r\in \N$ so that every unit norm
Riesz basic sequence with upper Riesz bound $B$ is a finite union of
$\epsilon$-Riesz basic sequences, where $r$ is as in Theorem \ref{Fei2}.
\end{theorem}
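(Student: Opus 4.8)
The plan is to reduce Theorem~\ref{rep} to the Feichtinger Conjecture, Theorem~\ref{Fei2}, by observing that a unit norm Riesz basic sequence is in particular a unit norm Bessel sequence with Bessel bound $B$. Indeed, if $\{u_i\}_{i=1}^\infty$ is a Riesz basic sequence with upper Riesz bound $B$, then for all scalars $\{a_i\}$ we have $\|\sum_i a_i u_i\|^2 \le B\sum_i |a_i|^2$, which is exactly the statement that the synthesis operator is bounded with $\|T\|^2 \le B$; equivalently $\{u_i\}_{i=1}^\infty$ is $B$-Bessel. Since each $u_i$ has unit norm, Theorem~\ref{Fei2} applies verbatim.

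First I would fix $0<\epsilon<1$ and let $r = \left(\frac{6(B+1)}{\epsilon}\right)^4$ in the real case and $r = \left(\frac{6(B+1)}{\epsilon}\right)^8$ in the complex case, exactly as in Theorem~\ref{Fei2}. Applying that theorem to the unit norm $B$-Bessel sequence $\{u_i\}_{i=1}^\infty$, we obtain a partition $\{S_j\}_{j=1}^r$ of $\N$ such that each subfamily $\{u_i\}_{i\in S_j}$ is an $\epsilon$-Riesz basic sequence, i.e.\ has Riesz bounds $1-\epsilon$ and $1+\epsilon$. That is precisely the assertion of Theorem~\ref{rep}.

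The only point requiring any care — and it is the step I would flag as the ``main obstacle,'' though it is genuinely routine — is checking that the upper Riesz bound $B$ is legitimately usable as a Bessel bound in the sense demanded by Theorem~\ref{Fei2}. The subtlety is that Theorem~\ref{Fei2} was stated for Bessel sequences indexed so that the relevant operator $T$ satisfies $\|Te_i\|=1$; here we should note that for a Riesz basic sequence the map $e_i \mapsto u_i$ is well-defined and bounded on $\ell_2$ (not merely on finite sums), so $S = T^*T$ has ones on the diagonal and $\|S\| \le B$, putting us exactly in the hypotheses of the proof of Theorem~\ref{Fei2}. With that observation the conclusion is immediate, and no new paving argument is needed: Theorem~\ref{rep} is a genuine special case of Theorem~\ref{Fei2}. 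One may additionally remark that if one only wants the $R_\epsilon$ statement for a fixed prescribed $\epsilon$, the displayed value of $r$ is already the bound inherited from Theorem~\ref{thm6.1} via Theorem~\ref{Fei2}, so no separate optimization is called for.
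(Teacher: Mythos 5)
Your proposal is correct and follows the same route as the paper: the paper treats Theorem \ref{rep} as an immediate special case of Theorem \ref{Fei2}, observing exactly as you do that a unit norm Riesz basic sequence with upper Riesz bound $B$ is a unit norm $B$-Bessel sequence, so the partition and the value of $r$ are inherited verbatim.
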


We note that Theorem \ref{rep} fails for equivalent norms
on a Hilbert space.  For example, if we renorm ${\ell}_{2}$ by letting
$|\{a_i\}| = \|{a_i}\|_{{\ell}_2} +\sup_i |a_i|$, then the
$R_{\epsilon}$-Conjecture fails for this equivalent norm. To see this,
let $f_i = (e_{2i}+e_{2i+1})/(\sqrt{2}+1)$ where
$\{e_i\}_{i\in \N}$ is the unit vector basis of ${\ell}_2$.
This is now a
unit norm Riesz basic sequence, but no infinite subset satisfies theorem
\ref{rep}.  To check this, let
$J\subset \N$ with $|J|=n$ and $a_i = 1/\sqrt{n}$ for
$i\in J $.  Then,
$$
|\sum_{i\in J}a_i f_i| =  \frac{1}{\sqrt{2}+1}\left ( \sqrt{2} +
\frac{1}{\sqrt{n}}\right ).
$$
Since the norm above is bounded away from one for $n\ge 2$,
we cannot satisfy the requirements of theorem \ref{rep}.

In 1987, Bourgain and Tzafriri \cite{BT} proved a fundamental
result in Operator Theory known as the
{\bf Restricted Invertibility Principle}.

\begin{theorem}[Bourgain-Tzafriri]\label{TBST1}
There are universal constants $A,c>0$ so that whenever
$T:{\ell}_{2}^{n}\rightarrow {\ell}_{2}^{n}$ is a linear
operator for which $\|Te_{i}\|=1$, for $1\le i\le n$,
then there exists
a subset ${\sigma}\subset \{1,2,\ldots , n\}$ of cardinality
$|{\sigma}|\ge{cn}/{\|T\|^{2}}$ so that for all
$j=1,2,\ldots ,n$ and for all
choices of scalars $\{a_{j}\}_{j\in {\sigma}}$,
$$
\|\sum_{j\in {\sigma}}a_{j}Te_{j}\|^{2} \ge
A\sum_{j\in {\sigma}}|a_{j}|^{2}.
$$
\end{theorem}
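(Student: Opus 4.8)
The statement of Theorem~\ref{TBST1} is the original 1987 restricted invertibility principle, and its proof by Bourgain and Tzafriri in \cite{BT} is an intricate iterated random-selection argument unrelated to Kadison--Singer. From the viewpoint of this paper the natural plan is to recover it --- and, more importantly, to upgrade the lower bound $A$ to $1-\epsilon$ for any prescribed $\epsilon>0$, which is the long-studied Bourgain--Tzafriri \emph{conjecture} --- by reducing it to the Feichtinger-type theorem already established in Theorem~\ref{Fei2}. I would split the argument into a \emph{norm-control} step followed by a \emph{paving} step.

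For the norm-control step, put $B=\|T\|^2$ and regard $v_i=Te_i$ as unit vectors with $\bigl\|\sum_{i=1}^n v_iv_i^*\bigr\|=\|TT^*\|=B$. A classical selection argument --- random extraction of a subset of size $\asymp n/B$ together with a matrix deviation estimate, refined by iteration to remove logarithmic losses, which is essentially the norm-control half of \cite{BT} --- produces $\sigma_0\subseteq\{1,\dots,n\}$ with $|\sigma_0|\ge c_0\, n/\|T\|^2$ and $\|TP_{\sigma_0}\|\le C_0$, where $c_0,C_0$ are universal and $P_{\sigma_0}$ is the coordinate projection onto $\sigma_0$. Restricted to $\ell_2^{\sigma_0}$ the operator $TP_{\sigma_0}$ still has unit-norm columns $\{Te_i\}_{i\in\sigma_0}$, but now these form a unit-norm Bessel sequence whose Bessel bound $\|TP_{\sigma_0}\|^2\le C_0^2$ is an absolute constant.

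For the paving step, apply Theorem~\ref{Fei2} to this unit-norm $C_0^2$-Bessel sequence: it partitions $\sigma_0$ into $r=\bigl(6(C_0^2+1)/\epsilon\bigr)^{8}$ subsets, on each of which $\{Te_i\}$ is an $\epsilon$-Riesz basic sequence, in particular has lower Riesz bound $1-\epsilon$. Since $r$ depends only on $\epsilon$, pigeonholing over the $r$ pieces yields a single $\sigma\subseteq\sigma_0$ with
\[
 |\sigma|\ \ge\ \frac{|\sigma_0|}{r}\ \ge\ \frac{c_0}{r}\cdot\frac{n}{\|T\|^2}\ =:\ c\cdot\frac{n}{\|T\|^2},
\]
and for all scalars $\{a_j\}_{j\in\sigma}$,
\[
 \Bigl\|\sum_{j\in\sigma}a_j\,Te_j\Bigr\|^2\ \ge\ (1-\epsilon)\sum_{j\in\sigma}|a_j|^2 .
\]
Taking $\epsilon=\tfrac12$ gives $A=\tfrac12$ with a universal $c$, which is exactly Theorem~\ref{TBST1}; keeping $\epsilon$ free gives the stronger form $A=1-\epsilon$ at the cost $c=c(\epsilon)$, i.e.\ the Bourgain--Tzafriri conjecture.

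The real obstacle is concentrated entirely in the norm-control step, and it is precisely where the exponent $\|T\|^{-2}$ (rather than a larger power of $\|T\|$) is forced. This step does \emph{not} follow from the Marcus/Spielman/Srivastava estimate with the constants currently available: applying Theorem~\ref{thm6.1} directly to $I-T^*T$ only gives $\|P_j(I-T^*T)P_j\|\le\epsilon(1+\|T\|^2)$, so one is forced to take $\epsilon\asymp\|T\|^{-2}$ and hence $r\asymp\|T\|^{8}$, yielding merely $|\sigma|\gtrsim n/\|T\|^{8}$. Accordingly I would either import the norm-control estimate from \cite{BT} as a black box, or --- if one wants a statement resting solely on \cite{MSS} --- settle for the weaker but still linear-in-$n$ conclusion that for every $\epsilon>0$ there is $\sigma$ with $|\sigma|\ge c(\epsilon)\,n/\|T\|^{8}$ and lower Riesz bound $1-\epsilon$, obtained by applying Theorem~\ref{Fei2} and pigeonholing over all $n$ columns at once.
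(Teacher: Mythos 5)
The paper does not prove Theorem \ref{TBST1} at all: it is the classical 1987 restricted invertibility theorem, stated verbatim with the citation \cite{BT}, and the paper points to \cite{SS} for an algorithmic proof. The only statement in this circle that the paper actually derives from \cite{MSS} is the partition (Feichtinger-type) strengthening, Theorem \ref{CBST1}, whose proof is the one of Theorem \ref{Fei2}. So there is no internal proof to compare your argument against; the theorem functions as quoted background.

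Judged on its own terms, your two-step plan (norm control, then paving plus pigeonhole) is coherent and, granting the first step, would even give the stronger lower bound $1-\epsilon$ with $|\sigma|\ge c(\epsilon)n/\|T\|^2$, which is essentially the route taken in the literature (cf.\ the Casazza--Tremain revisiting of the theorem \cite{CT}). But as a proof of the stated theorem it has a genuine gap that you yourself flag: the norm-control selection of $\sigma_0$ with $|\sigma_0|\ge c_0 n/\|T\|^2$ and $\|TP_{\sigma_0}\|\le C_0$ is precisely where the crucial $\|T\|^{-2}$ cardinality enters, and you import it as a black box from \cite{BT}, i.e.\ from the very paper whose theorem is being proved; nothing in Theorems \ref{MSS1}, \ref{thm6.1} or \ref{Fei2} with the constants recorded here supplies it. Your fallback --- applying Theorem \ref{Fei2} directly with $B=\|T\|^2$ and pigeonholing --- is correct but yields only $|\sigma|\gtrsim \epsilon^4 n/\|T\|^{8}$ in the real case (and $\epsilon^8 n/\|T\|^{16}$ in the complex case, since the exponent there is $8$), which is strictly weaker than the stated $cn/\|T\|^2$. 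So your diagnosis is right and matches the paper's implicit stance: Theorem \ref{TBST1} is a cited classical result, not a consequence of the MSS-based machinery with the constants available in this paper.
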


In a significant advance, Spielman and Srivastave \cite{SS} gave
an algorithm for proving the restricted invertibility theorem.
Theorem \ref{TBST1} gave rise
to a problem in the area which has received a great deal of
attention \cite{BT1,CT} known as the {\bf Bourgain-Tzafriri
Conjecture}.  No one really noticed that this result is the finite
version of the {\bf Feichtinger Conjecture}.  This conjecture
is now a theorem.  The proof is identical to the proof of
Theorem \ref{Fei2}

\begin{theorem}\label{CBST1}
For every $0<\epsilon <1$ and 
for every $B>1$ there is a natural number $r$
satisfying:
For any natural number $n$,
if $T:{\ell}_2^n \rightarrow {\ell}_2^n$ is a linear operator
with $\|T\|\le B$ and $\|Te_{i}\|=1$ for all $i=1,2,\ldots , n$,
 then there is a partition
$\{S_{j}\}_{j=1}^{r}$ of $\{1,2,\ldots , n\}$ so that
for all $j=1,2,\ldots ,r$ and
 all choices of scalars $\{a_{i}\}_{i\in S_{j}}$ we have:
$$
(1-\epsilon) \sum_{i\in S_{j}}|a_{i}|^{2}\le
\|\sum_{i\in S_{j}}a_{i}Te_{i}\|^{2}\le (1+\epsilon) \sum_{i\in S_{j}}|a_{i}|^{2},
$$
where
\[ r= \left ( \frac{6(B+1)}{\epsilon)}\right )^4 \mbox{ in the real case } ,\]
and
\[ r= \left ( \frac{6(B+1)}{\epsilon}\right )^8 \mbox{ in the complex case } .\]
\end{theorem}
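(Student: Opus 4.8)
The plan is to run the proof of Theorem~\ref{Fei2} essentially verbatim; the only change is that the index set $\{1,2,\ldots,n\}$ is now finite, so no convergence issues arise and the partition is automatically finite. First I would pass from the operator $T$ to the positive self-adjoint operator $S=T^{*}T$ on $\ell_{2}^{n}$. Since
\[ \langle Se_{i},e_{i}\rangle = \langle T^{*}Te_{i},e_{i}\rangle = \|Te_{i}\|^{2}=1\qquad(1\le i\le n),\]
the matrix of $S$ carries $1$'s on the diagonal, so $I-S$ is self-adjoint with zero diagonal; moreover $\|S\|=\|T\|^{2}$, and hence $\|I-S\|\le\|S\|+1$ by the triangle inequality.

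Second, I would apply Theorem~\ref{thm6.1} to the zero-diagonal self-adjoint matrix $I-S$ with the paving parameter $\epsilon/(\|S\|+1)$ in place of $\epsilon$. This produces a partition $\{S_{j}\}_{j=1}^{r}$ of $\{1,2,\ldots,n\}$ (with $r=(6(\|S\|+1)/\epsilon)^{4}$ in the real case and $r=(6(\|S\|+1)/\epsilon)^{8}$ in the complex case, so that substituting the bound on $\|S\|=\|T\|^{2}$ in terms of $B$ gives the values of $r$ recorded in the statement) such that the coordinate projections $Q_{S_{j}}$ onto the coordinates in $S_{j}$ satisfy
\[ \|Q_{S_{j}}(I-S)Q_{S_{j}}\|\le\frac{\epsilon}{\|S\|+1}\,\|I-S\|\le\epsilon,\qquad j=1,\ldots,r.\]

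Third, I would fix $j$ and scalars $\{a_{i}\}_{i\in S_{j}}$, set $u=\sum_{i\in S_{j}}a_{i}e_{i}$ (so $Q_{S_{j}}u=u$), and expand
\begin{align*}
\left\|\sum_{i\in S_{j}}a_{i}Te_{i}\right\|^{2}
&=\langle T^{*}Tu,u\rangle=\langle u,u\rangle-\langle(I-S)u,u\rangle\\
&=\|u\|^{2}-\langle Q_{S_{j}}(I-S)Q_{S_{j}}u,u\rangle.
\end{align*}
Since $|\langle Q_{S_{j}}(I-S)Q_{S_{j}}u,u\rangle|\le\|Q_{S_{j}}(I-S)Q_{S_{j}}\|\,\|u\|^{2}\le\epsilon\|u\|^{2}$, this quantity lies between $(1-\epsilon)\sum_{i\in S_{j}}|a_{i}|^{2}$ and $(1+\epsilon)\sum_{i\in S_{j}}|a_{i}|^{2}$, which is exactly the conclusion of the theorem.

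There is no genuine obstacle in this argument: all of the mathematical content is concentrated in Theorem~\ref{thm6.1}, hence ultimately in Theorem~\ref{MSS1}, both of which I may assume. The only points demanding any care are the constant bookkeeping (in particular, choosing the paving parameter as $\epsilon/(\|S\|+1)$ precisely so that the estimate on $\|Q_{S_{j}}(I-S)Q_{S_{j}}\|$ collapses to $\epsilon$) and the separate handling of the real and complex cases, which is already built into Theorem~\ref{thm6.1} and is the source of the exponents $4$ and $8$.
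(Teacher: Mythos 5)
Your proposal is correct and is essentially the paper's own argument: the paper simply declares the proof of Theorem \ref{CBST1} to be identical to that of Theorem \ref{Fei2}, which is exactly what you carry out (pave the zero-diagonal matrix $I-S$, $S=T^{*}T$, via Theorem \ref{thm6.1} with parameter $\epsilon/(\|S\|+1)$ and expand $\|TQ_{S_j}u\|^{2}$). The only caveat is constant bookkeeping inherited from the paper itself: with the hypothesis $\|T\|\le B$ one has $\|S\|\le B^{2}$, so the argument literally gives $r=\left(6(B^{2}+1)/\epsilon\right)^{4}$ (resp.\ power $8$), matching the stated $r=\left(6(B+1)/\epsilon\right)^{4}$ only if $B$ is read as a bound on $\|T\|^{2}$ (the Bessel bound), as in Theorem \ref{Fei2}.
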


\section{Paving in Harmonic Anslysis}\label{HA}\label{HA1}

Recall the definition of Laurent Operator:

\begin{definition}
If $f \in L^{\infty}[0,1]$, the {\bf Laurent operator with symbol f}, denoted $M_f$,
is the operator of multiplication by $f$.
\end{definition}

In the 1980's, a very deep study of the Paving Conjecture for Laurant Operators
was carried out by Berman/Halpern/Kaftal/Weiss \cite{BHKW,BHKW2,HKW}.
They produced a parade of new techniques and interesting results in this direction
including the introduction of the notion of {\bf uniform paving}.  They also showed
that matrices with positive coefficients are pavable.

We need the following notation.
\vskip10pt
\noindent {\bf Notation}:  If $I\subset \Z$, we let $S(I)$ denote the $L^2([0,1])$-closure of the span of the exponential functions with frequencies taken from $I$:
\begin{displaymath}
S(I)=\mathrm{cl}(\mathrm{span}\{\mathrm{e}^{2\pi\mathrm{i}nt}\}_{n\in I}).
\end{displaymath}

A deep and fundamental question in Harmonic Analysis is to
understand the distribution of the norm of a function
$f\in S(I)$.  It is
known \cite{BHKW,BHKW2,HKW} if that if $[a,b] \subset [0,1]$ and $\epsilon >0$,
then there
is a partition of $\Z$ into arithmetic progressions
$A_j = \{nr+j\}_{n\in \Z}$, $0\le j\le r-1$
so that for all $f\in S(A_j)$ we have
$$
(1-\epsilon )(b-a)\|f\|^2
\le \|f\cdot {\chi}_{[a,b]}\|^2 \le (1+\epsilon)(b-a)\|f\|^2.
$$
What this says is that the functions in $S(A_j)$ have their norms nearly
uniformly distributed across $[a,b]$ and $[0,1]\setminus [a,b]$.  The
central question is whether such a result is true for arbitrary
measurable subsets of $[0,1]$ (but it is known that the partitions
can no longer be arithmetic progressions \cite{BS,HKW,HKW2}).
If $E$ is a measurable
subset of $[0,1]$,
 let $P_E$ denote the orthogonal projection
of $L^2[0,1]$ onto $L^2(E)$, that is, $P_E(f) = f\cdot {\chi}_E$.
The fundamental question here for many years, is now answered
by the following result which is an immediate consequence of
Theorem \ref{Fei2}.

\begin{theorem}\label{C100}
If $E\subset [0,1]$ is measurable and $\epsilon >0$ is given,
there is a partition $\{S_j\}_{j=1}^{r}$ of $\Z$
so that for all $j=1,2,\ldots ,r$ and all $f\in S(A_j)$
\[
(1-\epsilon )|E|\|f\|^2
\le \|P_{E}(f)\|^2 \le (1+\epsilon)|E|\|f\|^2,
\]
where
\[ r = \left ( \frac{6(|E|+1)}{\epsilon |E|} \right )^{8}.\]
\end{theorem}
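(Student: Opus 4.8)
The plan is to read Theorem \ref{C100} as a direct corollary of the Feichtinger-type result Theorem \ref{Fei2}, applied to the images of the exponential orthonormal basis under the projection $P_E$. First I would fix $0<|E|<1$ (the cases $|E|=1$, where $P_E=I$, and $|E|=0$ being trivial, respectively vacuous) and set $f_n = P_E(\mathrm{e}^{2\pi\mathrm{i}nt}) = \mathrm{e}^{2\pi\mathrm{i}nt}\chi_E \in L^2(E)$ for $n\in\Z$. Since $\{\mathrm{e}^{2\pi\mathrm{i}nt}\}_{n\in\Z}$ is an orthonormal basis, hence a Parseval frame, for $L^2[0,1]$, Theorem \ref{FT20} gives that $\{f_n\}_{n\in\Z}$ is a Parseval frame for $P_E L^2[0,1] = L^2(E)$; in particular it is $1$-Bessel. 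Moreover $\|f_n\|^2 = \int_E 1\,dt = |E|$ for every $n$, so the normalized family $\{|E|^{-1/2}f_n\}_{n\in\Z}$ is a unit-norm Bessel sequence with Bessel bound $B = 1/|E| > 1$.

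Next I would apply Theorem \ref{Fei2} in the complex case to this unit-norm $B$-Bessel sequence with the given $\epsilon$: it yields a partition $\{S_j\}_{j=1}^r$ of $\Z$ into $\epsilon$-Riesz basic sequences, where
\[ r = \left(\frac{6(B+1)}{\epsilon}\right)^8 = \left(\frac{6(1/|E|+1)}{\epsilon}\right)^8 = \left(\frac{6(|E|+1)}{\epsilon|E|}\right)^8,\]
which is exactly the value claimed in the theorem. By definition of $\epsilon$-Riesz basic sequence, for each $j$ and all scalars $\{a_n\}_{n\in S_j}$ one has
\[ (1-\epsilon)\sum_{n\in S_j}|a_n|^2 \le \Bigl\|\sum_{n\in S_j}a_n\,|E|^{-1/2}f_n\Bigr\|^2 \le (1+\epsilon)\sum_{n\in S_j}|a_n|^2.\]

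Finally I would transfer this back to $S(S_j)$. Any $f\in S(S_j)$ may be written $f = \sum_{n\in S_j}a_n\mathrm{e}^{2\pi\mathrm{i}nt}$ with $\|f\|^2 = \sum_{n\in S_j}|a_n|^2$ by orthonormality of the exponentials, and then $P_E(f) = \sum_{n\in S_j}a_n f_n$, so $\|P_E(f)\|^2 = |E|\,\bigl\|\sum_{n\in S_j}a_n\,|E|^{-1/2}f_n\bigr\|^2$; substituting the displayed two-sided bound gives $(1-\epsilon)|E|\|f\|^2 \le \|P_E(f)\|^2 \le (1+\epsilon)|E|\|f\|^2$, as required. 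I do not anticipate a real obstacle — the statement is essentially a change of language — so the only point requiring care is the bookkeeping of the $|E|^{1/2}$ normalization factors, together with the observation that $\{f_n\}_{n\in\Z}$ is genuinely \emph{Parseval} (not merely Bessel), since it is this that fixes the Bessel bound of the normalized family at $1/|E|$ and hence pins down the stated value of $r$.
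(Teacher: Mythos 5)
Your proposal is correct and follows exactly the route the paper intends: Theorem \ref{C100} is presented there as an immediate consequence of Theorem \ref{Fei2}, applied to the Fourier frame $\{e^{2\pi i n t}\chi_E\}_{n\in\Z}$, which is a Parseval frame for $L^2(E)$ by Theorem \ref{FT20}, so that after normalization one gets a unit-norm Bessel sequence with bound $1/|E|$ and hence the stated value of $r$. Your write-up simply fills in the normalization bookkeeping that the paper leaves implicit.
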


Recall that $\{e^{2\pi int}\}_{n\in \Z}$ is an orthonormal basis for $L^2[0,1]$.
If $E \subset [0,1]$ of positive Lebesgue measure and $L^2(E)$ denotes the
corresponding Hilbert space of square-integrable functions on $E$, then
$f_n(t) = e^{2\pi int}\chi_E$ for ${n\in \Z}$ is a Parseval frame for $L^2(E)$ called
the {\bf Fourier Frame} for $L^2(E)$.  Much work has been expended on trying to
prove the Feichtinger Conjecture for Fourier Frames.

If $f\in L^2[0,1]$ and $0 \not= a$ we define
\[ (T_af)(t)= f(t-a).\]
Casazza/Christensen/Kalton \cite{CCK} showed that if $f\in L^2[0,1]$ then
$\{T_n(f)\}_{n\in \N}$ is a frame if and only if it is a Riesz basic sequence.

Halpern/Kaftal/Weiss \cite{HKW} studied {\bf uniform pavings} for Laurent operators.
In particular, they asked when we can pave Laurent operators with arithmetic
progressions from $\Z$?  They showed that this occurs if and only if the symbol
$f$ is Riemann Integrable.  As a consequence of \cite{MSS}, a result of Paulsen
implies that we can at least pave all Laurent operators with subsets of $\Z$ which
have {\bf bounded gaps}.

\begin{definition}
A set $S \subset \N$ is called {\bf syndetic} if for some finite subset $F$ of $\N$
we have
\[ \cup_{n\in F}(S-n)=\N ,\]
where 
\[ S-n = \{m\in \N:m+n \in S\}.\]
\end{definition}

Thus syndetic sets have {\bf bounded gaps}.   I.e.  There is an integer $p$ so that
$[a,a+1,\ldots,a+p] \cap S \not= \phi$ for every $a \in \N$.  We will call $p$ the
{\bf gap length}.

Paving by syndetic sets arose from the fact that the Grammian of a Fourier Frame
is a Laurent matrix.  Moreover, dividing frames into Riesz sequences is equivalent
to paving their Grammian \cite{P,P1,CFMT1}.   

At GPOTS (2008) Paulsen presented a quite general paving result which included
paving by syndetic sets.  The idea was to work in $\ell^2(G)$ where $G$ is a countable
discrete group and $G$-invariant frames - I.e. frames which are invariant
under the action of $G$.  Fourier frames are thus $\Z$-invariant.  Paulsen next shows
that a frame is G-invariant if and only if its Grammian belongs to the group von Neumann
algebra $VN(G)$.  Paulsen then shows that an element of $VN(G)$ is pavable if and
only if it is pavable by syndetic sets.  Unraveling the notation
, it follows that $G$-invariant frames which can be partitioned
into Riesz sequences can also be partitioned with syndetic partitions.  
These results then appeared in his paper \cite{P,P1}.   Lawton, \cite{La} gave a direct
proof of  syndetic pavings for Fourier Frames.  We now give this result with the
constants available from \cite{MSS}.    

\begin{theorem}
The Fourier frame $\{e^{2 \pi int}\chi_E\}_{n\in \Z}$ for $L^2(E)$ can be partitioned
into $r$ syndetic sets $\{S_j\}_{j=1}^r$ with gap length $p\le r$ so that
\[ \left \{e^{2\pi int}\chi_E\right \}_{n \in S_j}\mbox{ is a $\epsilon$-Riesz sequence for all }
j=1,2,\ldots,r,\] where
\[ r= \left (  \frac{6(|E|+1)}{\epsilon |E|} \right )^8.\]  
\end{theorem}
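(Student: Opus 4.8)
The plan is to derive the statement from Theorem~\ref{Fei2} together with Paulsen's theorem that an element of a group von Neumann algebra is pavable if and only if it is pavable by syndetic sets. First I would normalize the Fourier frame. Set $g_n=|E|^{-1/2}e^{2\pi int}\chi_E$ for $n\in\Z$. Since $\{e^{2\pi int}\chi_E\}_{n\in\Z}$ is a Parseval frame for $L^2(E)$, the sequence $\{g_n\}_{n\in\Z}$ is a unit-norm $\frac1{|E|}$-tight frame, hence a unit-norm Bessel sequence with Bessel bound $B=1/|E|$. Theorem~\ref{Fei2} in the complex case then produces a partition $\{S_j\}_{j=1}^r$ of $\Z$ with
\[
r=\left(\frac{6(B+1)}{\epsilon}\right)^8=\left(\frac{6(|E|+1)}{\epsilon|E|}\right)^8,
\]
such that each $\{g_n\}_{n\in S_j}$ is an $\epsilon$-Riesz basic sequence; rescaling by $\sqrt{|E|}$ this says exactly that each $\{e^{2\pi int}\chi_E\}_{n\in S_j}$ satisfies the Riesz estimate claimed. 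This already reproves Theorem~\ref{C100}, but with no control on the arithmetic structure of the $S_j$.

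Next I would recast ``splitting the frame into Riesz basic sequences'' as paving a Laurent operator. The Gram operator of $\{e^{2\pi int}\chi_E\}_{n\in\Z}$ is the Laurent matrix $G=(\widehat{\chi_E}(j-k))_{j,k\in\Z}$, all of whose diagonal entries equal $|E|$; as noted in the text, $\{e^{2\pi int}\chi_E\}_{n\in A}$ has Riesz bounds within $\eta$ of $|E|$ precisely when $\|P_A(G-|E|I)P_A\|\le\eta$, so a partition into $\epsilon$-Riesz sequences is exactly an $(r,\epsilon)$-paving of the zero-diagonal self-adjoint operator $G-|E|I$. Because $G$ is translation invariant it lies in $VN(\Z)$, and Step~1 shows $G-|E|I$ is $(r,\epsilon)$-pavable. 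Applying Paulsen's theorem to the group $\Z$, this paving may be taken to consist of syndetic sets, with the same $r$ and the same $\epsilon$-Riesz bounds; the gap-length bound $p\le r$ comes out of the construction, which passes through a finite paving on a cyclic group $\Z/N\Z$ (solvable by the finite Marcus/Spielman/Srivastava theorem, Theorem~\ref{MSS1}) and then periodizes, the period $N$ and the finite paving being chosen so that each of the $r$ classes meets every window of $r+1$ consecutive residues. Translation invariance of $G$ guarantees that the periodized classes are still $\epsilon$-Riesz sequences, which completes the argument.

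The main obstacle is this last step: Theorem~\ref{Fei2} is blind to where the index sets sit inside $\Z$, so something genuinely extra is needed to make them syndetic with a controlled gap. That extra input is the group von Neumann algebra structure of a Laurent operator --- translation invariance, so that translates of a good index set remain good, together with the canonical trace on $VN(\Z)$ --- which, via a compactness argument over growing windows (equivalently, via the reduction to the finite cyclic groups $\Z/N\Z$), forces a periodic, hence syndetic, paving and pins the gap length at $r$. By contrast, the normalization in the first step, the identification of the Gram operator with a Laurent matrix, and the bookkeeping of the constant are all routine, so the entire novelty of the theorem over Theorem~\ref{C100} is concentrated in invoking (and, if one wants a self-contained account, reproving along the lines of \cite{P,P1,La}) the syndetic paving theorem for $VN(\Z)$.
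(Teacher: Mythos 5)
Your route is essentially the paper's own: the paper gives no independent argument for this theorem, but obtains it exactly as you do, by feeding the MSS-based constant from Theorem \ref{Fei2} (applied to the normalized Fourier frame, which is unit-norm Bessel with bound $1/|E|$, giving $r=\left(6(|E|+1)/(\epsilon|E|)\right)^{8}$) into the Paulsen/Lawton result that partitions of $\Z$-invariant (Fourier) frames into Riesz sequences --- equivalently, pavings of their Laurent Gram operator in $VN(\Z)$ --- can be taken to consist of syndetic sets with controlled gap length \cite{P,P1,La}. Your added sketch of the periodization mechanism is just a gloss on those cited proofs, so the proposal is correct and follows the same approach as the paper.
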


\section{"Large" and "Decomposable" subspaces of $\H$}\label{LDS}
\setcounter{equation}{0}

In this section we give some new theorems arising from \cite{MSS}
relating to {\bf large} and {\bf decomposable} subspaces
of a Hilbert space.  These ideas were introduced in \cite{CFTW}.
Throughout this section we will use the
notation:
\vskip10pt
\noindent {\bf Notation}:  If $E\subset I$ we let $P_E$ denote the
orthogonal projection of ${\ell}_2(I)$ onto ${\ell}_2(E)$.
Also, recall that we write $\{e_i\}_{i\in I}$ for the standard
orthonormal basis for ${\ell}_2(I)$.
\vskip10pt
For results on frames, see Section \ref{FT}.

\begin{definition}
A subspace $\H$ of ${\ell}_2(I)$ is {\bf A-large} for $A>0$
if it is closed and for each $i\in I$, there is a vector
$f_i \in \H$ so that $\|f_i\|=1$ and $|f_i(i)|\ge A$.  The
space $\H$ is {\bf large} if it is A-large for some $A>0$.
\end{definition}

It is known that every frame is isomorphic to a Parseval frame.  The next
lemma gives an alternative identification of these Parseval frames and relies 
on Proposition \ref{FTP10}.  

\begin{lemma}
 Let $T^{*}:\H \rightarrow
{\ell}_2(I)$ be the analysis operator for a frame
$\{f_i\}_{i\in I}$ for $\H$ and let $P$ be the orthogonal
projection of ${\ell}_2(I)$ onto $\H$.  Then $\{Pe_i\}_{i\in I}$
is a Parseval frame for $T^{*}(\H)$ which is isomorphic to $\{f_i\}_{i\in I}$.
\end{lemma}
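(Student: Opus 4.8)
The plan is to read everything off from the range of the analysis operator. First I would record, via Theorem \ref{FTTT}, that the analysis operator $T^{*}$ of the frame $\{f_i\}_{i\in I}$ is a (possibly into) isomorphism of $\H$; in particular it is bounded below, so $T^{*}(\H)$ is a \emph{closed} subspace of $\ell_2(I)$ and $T^{*}$ restricts to an invertible operator from $\H$ onto $T^{*}(\H)$. Accordingly I read the hypothesis ``$P$ is the orthogonal projection of $\ell_2(I)$ onto $\H$'' as ``$P$ is the orthogonal projection of $\ell_2(I)$ onto $T^{*}(\H)$'', which is the only sensible interpretation (and is the kind of identification of $\H$ with $T^{*}(\H)$ already used in the proof of Theorem \ref{T3}), since the conclusion asks for a frame for $T^{*}(\H)$.

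For the Parseval assertion I would simply invoke Theorem \ref{FT20}: the orthonormal basis $\{e_i\}_{i\in I}$ is in particular a Parseval frame for $\ell_2(I)$ (frame bounds $1,1$), so its image $\{Pe_i\}_{i\in I}$ under the orthogonal projection $P$ is a Parseval frame for $P\ell_2(I)=T^{*}(\H)$; note $Pe_i\in T^{*}(\H)$ automatically. If one prefers a bare-hands check: for $g=T^{*}f\in T^{*}(\H)$ one has $\langle g,Pe_i\rangle=\langle g,e_i\rangle=\langle T^{*}f,e_i\rangle=\langle f,f_i\rangle$, so $\sum_{i\in I}|\langle g,Pe_i\rangle|^2=\sum_{i\in I}|\langle f,f_i\rangle|^2=\|T^{*}f\|^2=\|g\|^2$.

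For the isomorphism assertion, let $S=TT^{*}$ be the (positive, invertible) frame operator of $\{f_i\}_{i\in I}$ and put $L:=T^{*}S^{-1}\colon\H\to T^{*}(\H)$. Being a composition of the invertible operators $S^{-1}$ on $\H$ and $T^{*}\colon\H\to T^{*}(\H)$, $L$ is an invertible operator onto $T^{*}(\H)$, so it suffices to verify $Lf_i=Pe_i$ for every $i$. Both vectors lie in $T^{*}(\H)$, so I would test equality against an arbitrary $g=T^{*}f\in T^{*}(\H)$: using $(T^{*})^{*}=T$ and $TT^{*}=S$,
\[
\langle T^{*}f,\,T^{*}S^{-1}f_i\rangle=\langle f,\,TT^{*}S^{-1}f_i\rangle=\langle f,f_i\rangle=\langle T^{*}f,e_i\rangle=\langle T^{*}f,Pe_i\rangle .
\]
Hence $Lf_i=Pe_i$ for all $i$, which exhibits $\{f_i\}_{i\in I}$ and $\{Pe_i\}_{i\in I}$ as isomorphic. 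The same conclusion can be packaged through Proposition \ref{FTP10}: transporting $\{Pe_i\}_{i\in I}$ back to $\H$ by $(T^{*})^{-1}$ produces the canonical dual frame $\{S^{-1}f_i\}_{i\in I}$, whose analysis operator $T^{*}S^{-1}$ has the same (trivial) kernel as the analysis operator $T^{*}$ of $\{f_i\}_{i\in I}$, so Proposition \ref{FTP10} gives the isomorphism directly.

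I do not expect a genuine obstacle here; the only thing requiring care is the bookkeeping of which Hilbert space each frame lives in and keeping the adjoint relation $(T^{*})^{*}=T$ straight. Once one has the two facts that $T^{*}$ is bounded below on $\H$ (so its range is closed and $T^{*}\colon\H\to T^{*}(\H)$ is invertible) and that $P$ fixes $T^{*}(\H)$ pointwise, both assertions are routine.
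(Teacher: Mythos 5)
Your argument is correct, and your reading of the statement (identifying $\H$ with $T^{*}(\H)$, so that $P$ projects onto $T^{*}(\H)$) is exactly the identification the paper intends. For the Parseval claim you and the paper do essentially the same thing: the paper cites Theorem \ref{T3} (whose ``only if'' half is Theorem \ref{FT20}), you cite Theorem \ref{FT20} directly, and your hands-on identity $\langle T^{*}f,Pe_i\rangle=\langle f,f_i\rangle$ is the same computation that appears in the paper's proof of Theorem \ref{T3}. Where you genuinely diverge is the isomorphism: the paper notes that the Parseval frame $\{Pe_i\}_{i\in I}$ has synthesis operator $P$ and that the range of its analysis operator is $P(\ell_2(I))=T^{*}(\H)$, and then appeals to Proposition \ref{FTP10}; you instead exhibit the implementing operator explicitly, $L=T^{*}S^{-1}$, and verify $Lf_i=Pe_i$ by testing against $T^{*}(\H)$ (equivalently, $P=T^{*}S^{-1}T$). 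Your route is more constructive---it names the isomorphism, which the paper's citation of Proposition \ref{FTP10} does not---at the cost of a short adjoint computation, while the paper's route is shorter but leans entirely on that proposition. One caution about your alternative ``packaging'': Proposition \ref{FTP10} is worded in terms of analysis operators, but its proof (where $T_1e_i=f_i$) shows the operators in question are really the synthesis operators; since analysis operators of frames are always injective, the criterion ``both analysis kernels are trivial'' is vacuous and by itself proves nothing. The packaging does go through when applied correctly: the synthesis operator of $\{S^{-1}f_i\}_{i\in I}$ is $S^{-1}T$, which has the same kernel as the synthesis operator $T$ of $\{f_i\}_{i\in I}$, and then the isomorphism $T^{*}:\H\to T^{*}(\H)$ carries $S^{-1}f_i$ to $Pe_i$. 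Since your main argument does not rely on this side remark, nothing essential is affected.
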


{\it Proof}:
Note that $\{Pe_i\}_{i\in I}$ is a Parseval frame (Theorem \ref{T3})
with synthesis operator $P$ and analysis operator $T_{1}^{*}$ satisfying
$T_{1}^{*}(\H) = P({\ell}_2(I)) = T^{*}(\H)$.  By Proposition \ref{FTP10},
$\{Pe_i\}_{i\in I}$
is equivalent to $\{f_i\}_{i\in I}$.
\qed

Now we will relate large subspaces of a Hilbert space with the range of the
analysis operator of some bounded frame.  We also give a quantative version
of the result for later use.

\begin{proposition}\label{CEP2}
Let $\H$ be a subspace of ${\ell}_2(I)$.  

(I)  The following are equivalent:

(1)  The subspace $H$ is large.

(2)  The subspace $H$ is the range of the analysis operator of
some bounded frame.

(II)  The following are equivalent:

(3)  The subspace $\H$ is A-large.

(4)  If $P$ is the orthogonal projection of ${\ell}_2(I)$ onto
$\H$ then $\|Pe_i\|\ge A$, for all $i\in I$.

\end{proposition}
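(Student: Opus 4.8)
The plan is to establish part (II) first and then read off part (I) from it, using the Naimark-type description of Parseval frames (Theorem \ref{T3}) and the isomorphism criterion of Proposition \ref{FTP10} as packaged in the Lemma immediately preceding the statement. Throughout I would write $P$ for the orthogonal projection of $\ell_2(I)$ onto the closed subspace $\H$, and lean on two elementary identities valid for every $i\in I$: since $P=P^*=P^2$ one has $\langle Pe_i,e_i\rangle=\langle Pe_i,Pe_i\rangle=\|Pe_i\|^2$, and for $f\in\H$ one has $f(i)=\langle f,e_i\rangle=\langle Pf,e_i\rangle=\langle f,Pe_i\rangle$.

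For $(3)\Rightarrow(4)$ I would take the witnessing unit vectors $f_i\in\H$ with $|f_i(i)|\ge A$ and apply Cauchy--Schwarz through the second identity: $A\le|f_i(i)|=|\langle f_i,Pe_i\rangle|\le\|f_i\|\,\|Pe_i\|=\|Pe_i\|$. For $(4)\Rightarrow(3)$ I would, conversely, normalize: put $f_i=Pe_i/\|Pe_i\|\in\H$ (legitimate since $\|Pe_i\|\ge A>0$), note $\|f_i\|=1$, and use the first identity to get $f_i(i)=\|Pe_i\|^2/\|Pe_i\|=\|Pe_i\|\ge A$. That disposes of (II).

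For part (I), the implication $(1)\Rightarrow(2)$ goes as follows: a large subspace is closed and $A$-large for some $A>0$, so (II) gives $\|Pe_i\|\ge A>0$ for all $i$, while always $\|Pe_i\|\le\|e_i\|=1$; hence $\{Pe_i\}_{i\in I}$ is a bounded Parseval frame for $\H$ by Theorem \ref{T3}. I would then identify its analysis operator $T_1^*$ with the inclusion $\H\hookrightarrow\ell_2(I)$, since for $f\in\H$ we get $T_1^*f=\sum_i\langle f,Pe_i\rangle e_i=\sum_i\langle f,e_i\rangle e_i=f$; therefore $\H=T_1^*(\H)$ is the range of the analysis operator of a bounded frame, which is (2). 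For $(2)\Rightarrow(1)$: write $\H=T^*(\mathcal{K})$ where $T^*$ is the analysis operator of a bounded frame $\{f_i\}_{i\in I}$ for some Hilbert space $\mathcal{K}$; the analysis operator of a frame is an isomorphism into $\ell_2(I)$ (Theorem \ref{FTTT}), so $\H$ is closed and $P$ is defined, and the preceding Lemma furnishes an invertible $L$ with $Lf_i=Pe_i$ for all $i$. Then $\|Pe_i\|=\|Lf_i\|\ge\|L^{-1}\|^{-1}\|f_i\|\ge\|L^{-1}\|^{-1}\inf_j\|f_j\|=:A>0$, using exactly the lower boundedness built into ``bounded frame''; by (II) the subspace $\H$ is $A$-large, hence large.

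None of this involves a genuine obstacle; the only place that demands care is the bookkeeping of ranges and norms: verifying that the analysis operator of $\{Pe_i\}_{i\in I}$ has range \emph{exactly} $\H$ rather than a proper subspace, and that the word ``bounded'' in (2) really does transfer to a uniform positive lower bound on $\|Pe_i\|$ through the frame isomorphism $L$ --- which is the one spot where $\inf_j\|f_j\|>0$ is actually used.
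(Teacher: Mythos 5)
Your proof is correct and follows essentially the same route as the paper: Cauchy--Schwarz against $Pe_i$ for $(3)\Rightarrow(4)$, the bounded Parseval frame $\{Pe_i\}_{i\in I}$ (whose analysis operator is the inclusion of $\H$) for $(1)\Rightarrow(2)$, and the preceding Lemma together with the lower norm bound of a bounded frame for $(2)\Rightarrow(1)$. Your explicit normalization $f_i=Pe_i/\|Pe_i\|$ in $(4)\Rightarrow(3)$ is in fact slightly cleaner than the paper's version, which uses $Pe_i$ itself and thus only yields the constant $\|Pe_i\|^2\ge A^2$ without normalizing.
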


{\it Proof}:  (I)  
$(1)\Rightarrow (2)$:  Suppose $\H$ is large.  So, there exists an
$A>0$ such that for each $i\in I$, there exists a vector $f_i \in \H$
with $\|f_i\|=1$ and $|f_i(i)|\ge A$.  Given the projection $P$ of
(2) we have
$$
A \le |f_i(i)| = |\langle e_i,f_i\rangle | = |\langle Pe_i,f_i\rangle |
\le \|Pe_i\|\|f_i\| = \|Pe_i\|.
$$

Note that this also proves (II) (3) $\Rightarrow $ (4).

$(2) \Rightarrow (1)$:  Assume $\{f_i\}_{i\in I}$ is a bounded
frame for a Hilbert space $\K$ with analysis operator $T^{*}$ and
$T^{*}(\K) = \H$.  Now, $\{Pe_i\}_{i\in I}$ is a Parseval frame
for $\H$ which is the range of its own analysis operator.  Hence,
$\{f_i\}_{i\in I}$ is equivalent to $\{Pe_i\}_{i\in I}$ by
Proposition \ref{FTP10}.  Since $\{f_i\}_{i\in I}$ is bounded,
so is $\{Pe_i\}_{i\in I}$.  Choose $A>0$ so that $A\le \|Pe_i\| \le 1$,
for all $i\in I$.  Then
$$
A\le |\langle Pe_i,Pe_i \rangle | = |\langle Pe_i,e_i\rangle | =
|Pe_i (i)|.
$$
So $\H$ is a large subspace.

Note that this also proves (II) (4) $ \Rightarrow $ (3).
\qed

Now we need to learn how to decompose the range of the analysis operator
of our frames.

\begin{definition} \label{D:rD}
A closed subspace $\H$ of ${\ell}_2(I)$ is {\bf r-decomposable}
if for some natural number $r$ there exists a partition
$\{S_j\}_{j=1}^{r}$ of $I$
 so that $P_{S_j}(\H) = {\ell}_2(S_j)$, for all
$j=1,2,\ldots ,r$.  The subspace $\H$ is {\it finitely decomposable}
if it is r-decomposable for some r.
\end{definition}

For the next proposition we need a small observation.

\begin{lemma}\label{TFAL1}
Let $\{f_i\}_{i\in I}$ be a Bessel sequence in $\H$ having
synthesis operator $T$ and
analysis operator $T^{*}$.  Let
$E\subset I$, and let $\{f_i\}_{i \in E}$ have analysis operator $(T|_E)^{*}$.  Then
$$
P_ET^{*} = (T|_E)^{*}.
$$
\end{lemma}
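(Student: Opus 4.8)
The plan is to verify the operator identity by evaluating both sides on an arbitrary vector, using only the explicit description of an analysis operator recorded in Section \ref{FT}. First I would pin down the notation: for the Bessel sequence $\{f_i\}_{i\in I}$ the analysis operator acts by $T^{*}f = \sum_{i\in I}\langle f,f_i\rangle e_i$, while the subfamily $\{f_i\}_{i\in E}$ is itself a Bessel sequence whose synthesis operator $T|_E:\ell_2(E)\to\H$ satisfies $(T|_E)e_i=f_i$ for $i\in E$, so that its analysis operator is $(T|_E)^{*}f = \sum_{i\in E}\langle f,f_i\rangle e_i$. Here $P_E$ is understood as the coordinate projection $\ell_2(I)\to\ell_2(E)$.

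With these formulas the claim is immediate: for every $f\in\H$,
\[
P_ET^{*}f = P_E\Big(\sum_{i\in I}\langle f,f_i\rangle e_i\Big) = \sum_{i\in E}\langle f,f_i\rangle e_i = (T|_E)^{*}f,
\]
because $P_E$ simply discards the coordinates indexed by $I\setminus E$. Since $f$ is arbitrary, $P_ET^{*}=(T|_E)^{*}$.

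If one prefers to avoid the coordinate expansion, the same conclusion follows by duality, and this is the version I would actually write. Let $\iota_E:\ell_2(E)\to\ell_2(I)$ be the inclusion, i.e.\ the adjoint of $P_E$. Then $T\iota_E = T|_E$, since both maps send $e_i$ ($i\in E$) to $f_i$. Hence for all $f\in\H$ and all $g\in\ell_2(E)$,
\[
\langle P_ET^{*}f,g\rangle = \langle T^{*}f,\iota_Eg\rangle = \langle f,T\iota_Eg\rangle = \langle f,(T|_E)g\rangle = \langle (T|_E)^{*}f,g\rangle,
\]
and as $g$ ranges over $\ell_2(E)$ we get $P_ET^{*}f = (T|_E)^{*}f$, so $P_ET^{*}=(T|_E)^{*}$.

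There is essentially no obstacle: the statement is a bookkeeping identity expressing how passing to a sub-index-set interacts with the coordinate projection, and the only point requiring care is keeping the domains straight — $P_E$ as the map $\ell_2(I)\to\ell_2(E)$ and $T|_E$ as the synthesis operator of the subfamily on $\ell_2(E)$, not as a literal restriction of $T$ to a subspace. Once that is fixed, either one-line computation above completes the proof.
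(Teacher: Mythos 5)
Your first computation is exactly the paper's proof: evaluate both sides on an arbitrary $f\in\H$, expand $T^{*}f=\sum_{i\in I}\langle f,f_i\rangle e_i$, and note that $P_E$ keeps only the coordinates in $E$, giving $(T|_E)^{*}f$. The additional adjoint/duality argument is a correct but redundant alternative; the proposal is correct and matches the paper's approach.
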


{\it Proof}:
For all $f\in \H$,
$$
P_E T^{*}(f) = P_E\left ( \sum_{i\in I}\langle f,f_i\rangle e_i \right )
= \sum_{i\in E}\langle f,f_i \rangle e_i = (T|_E)^{*}(f).
$$
\qed

We now have

\begin{proposition}\label{CEP1}
If $\{f_i\}_{i\in I}$ is a unit norm frame for $\K$ with analysis operator $T^*$, then
for any $0<\epsilon <1$, $ T^{*}(\K)$ is r-decomposable  for
\[ r = \left ( \frac{6(\|T\|^2+1)}{\epsilon}\right )^4,\]
(with power 8 in the complex case).
\end{proposition}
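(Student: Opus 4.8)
The plan is to recognize that $r$-decomposability of the range $T^{*}(\K)$ is just a restatement of the possibility of partitioning the frame $\{f_i\}_{i\in I}$ into Riesz basic sequences, and then to invoke Theorem~\ref{Fei2}.

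First I would note that, as a unit norm frame for $\K$ with synthesis operator $T$, the family $\{f_i\}_{i\in I}$ is a unit norm Bessel sequence with Bessel bound $B=\|T^{*}\|^{2}=\|T\|^{2}$. Applying Theorem~\ref{Fei2} with this $B$ yields a partition $\{S_j\}_{j=1}^{r}$ of $I$ with $r=\left(\frac{6(\|T\|^{2}+1)}{\epsilon}\right)^{4}$ (power $8$ in the complex case) such that, for each $j$, the subfamily $\{f_i\}_{i\in S_j}$ is an $\epsilon$-Riesz basic sequence, in particular a Riesz basic sequence.

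Next I would pin down $P_{S_j}(T^{*}(\K))$. Since $T^{*}$ is an (into) isomorphism by Theorem~\ref{FTTT}, $T^{*}(\K)$ is a closed subspace of $\ell_2(I)$, so Definition~\ref{D:rD} applies. By Lemma~\ref{TFAL1}, $P_{S_j}T^{*}=(T|_{S_j})^{*}$, the analysis operator of the Bessel sequence $\{f_i\}_{i\in S_j}$ in $\K$; hence $P_{S_j}(T^{*}(\K))=(T|_{S_j})^{*}(\K)$ is precisely the range of that analysis operator. By the criterion recorded immediately after Theorem~\ref{FTTT} --- a Bessel sequence is a Riesz basic sequence exactly when its analysis operator is onto --- and since $\{f_i\}_{i\in S_j}$ is a Riesz basic sequence, $(T|_{S_j})^{*}$ maps $\K$ onto $\ell_2(S_j)$. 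Therefore $P_{S_j}(T^{*}(\K))=\ell_2(S_j)$ for every $j=1,2,\ldots,r$, which is exactly $r$-decomposability of $T^{*}(\K)$.

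The only step that carries any content is the identification in the third paragraph: via Lemma~\ref{TFAL1}, coordinate-projecting $T^{*}(\K)$ onto $\ell_2(S_j)$ produces the range of the analysis operator of $\{f_i\}_{i\in S_j}$, so the decomposability condition $P_{S_j}(\H)=\ell_2(S_j)$ is synonymous with $\{f_i\}_{i\in S_j}$ being a Riesz basic sequence. Once this dictionary is set up, the proposition --- and its constants --- follow verbatim from Theorem~\ref{Fei2}. (A minor point: $\{f_i\}_{i\in S_j}$ need not span $\K$, but this is harmless, since the range of $(T|_{S_j})^{*}$ over $\K$ equals its range over $\overline{\mathrm{span}}\{f_i\}_{i\in S_j}$, the orthogonal complement of that span being annihilated by $(T|_{S_j})^{*}$.)
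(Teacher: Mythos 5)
Your proof is correct and is essentially the paper's own argument: partition $I$ via Theorem~\ref{Fei2} into Riesz basic sequences, then use the surjectivity criterion following Theorem~\ref{FTTT} together with Lemma~\ref{TFAL1} to get $P_{S_j}T^{*}$ onto $\ell_2(S_j)$. Your spelled-out identification of $P_{S_j}(T^{*}(\K))$ and the spanning remark only make explicit what the paper leaves implicit, and your constant matches the proposition's statement (the power~$2$ appearing inside the paper's proof is a typo).
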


{\it Proof}:
We can partition $I$ into $\{S_j\}_{j=1}^{r}$ so that each
$\{f_i\}_{i\in S_j}$ is a Riesz basic sequence where
\[ r = \left ( \frac{6(\|T\|^2+1)}{\epsilon}\right )^2,\]
for any $0<\epsilon <1$.
Thus,
(see the discussion after Theorem \ref{FTTT})
$(T|_{S_j})^{*}$ is onto for every
$j=1,2,\ldots ,r$ and hence (by Lemma \ref{TFAL1})
$P_{S_j}T^{*}$ is onto for all $j=1,2,\ldots ,r$.
\qed

Now we can put this altogether.

\begin{theorem}
For every $0<A<1$ and $0<\epsilon <1$, 
there is a natural number 
\[  r = \left ( \frac{6(A^2+1)}{\epsilon A^2}\right )^4,\]
(power 8 in the complex case)  
so that every A-large subspace of ${\ell}_2(I)$ is r-decomposable.
\end{theorem}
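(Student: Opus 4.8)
The plan is to combine the two structural results just established: Proposition \ref{CEP2}, which identifies $A$-large subspaces with ranges of analysis operators of bounded frames whose ``coordinate norms'' $\|Pe_i\|$ are bounded below by $A$, and Proposition \ref{CEP1}, which decomposes such a range once we have a \emph{unit norm} frame of controlled upper bound. The only gap between these two is that an $A$-large subspace hands us the Parseval frame $\{Pe_i\}_{i\in I}$ with $A\le\|Pe_i\|\le 1$, which is bounded but not unit norm, whereas Proposition \ref{CEP1} is phrased for unit norm frames. So the first step is to normalize.

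First I would set $P$ to be the orthogonal projection of $\ell_2(I)$ onto the $A$-large subspace $\H$, so that by Proposition \ref{CEP2}(II) we have $A\le\|Pe_i\|\le 1$ for all $i\in I$. Define $g_i = Pe_i/\|Pe_i\|$; this is a unit norm frame for $\H$ (it is $\{Pe_i\}$ rescaled by factors in $[1,1/A]$, hence still a frame). Crucially, $\{g_i\}_{i\in I}$ has the \emph{same kernel of its analysis operator} as $\{Pe_i\}_{i\in I}$ — rescaling individual vectors by nonzero scalars does not change which coefficient sequences synthesize to zero — so by Proposition \ref{FTP10} the two frames are isomorphic and, more to the point, a subset $\{g_i\}_{i\in S_j}$ is a Riesz basic sequence exactly when $\{Pe_i\}_{i\in S_j}$ is, i.e. exactly when $P_{S_j}$ restricted to $\H$ is onto $\ell_2(S_j)$. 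Thus $r$-decomposability of $\H$ is detected by running the Feichtinger-type splitting on $\{g_i\}$.

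Next I would bound the upper frame bound of $\{g_i\}$. If $T^*$ denotes the analysis operator of $\{Pe_i\}$ (synthesis operator $P$, so $\|T\|=\|P\|=1$), the synthesis operator of $\{g_i\}$ is $S$ with $Se_i = Pe_i/\|Pe_i\|$; since each column is rescaled by a factor at most $1/A$, we get $\|S\|\le \|T\|/A = 1/A$, hence the upper frame bound of $\{g_i\}$ is at most $1/A^2$. Now apply Proposition \ref{CEP1} (equivalently Theorem \ref{Fei2}) to the unit norm frame $\{g_i\}$ with upper bound $B=1/A^2$: for any $0<\epsilon<1$ there is a partition $\{S_j\}_{j=1}^r$ of $I$ with each $\{g_i\}_{i\in S_j}$ an $\epsilon$-Riesz basic sequence, where
\[
r = \left(\frac{6(B+1)}{\epsilon}\right)^4 = \left(\frac{6(1/A^2+1)}{\epsilon}\right)^4 = \left(\frac{6(A^2+1)}{\epsilon A^2}\right)^4
\]
in the real case (power $8$ in the complex case). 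Since each $\{g_i\}_{i\in S_j}$ is a Riesz basic sequence, its analysis operator is onto (the discussion after Theorem \ref{FTTT}), and by Lemma \ref{TFAL1} this says $P_{S_j}T^*$ is onto, i.e. $P_{S_j}(\H) = P_{S_j}(T^*(\K)) = \ell_2(S_j)$. Hence $\H$ is $r$-decomposable, which is the claim.

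The computation is entirely routine; the one point that needs care — and the only place I see a possible subtlety — is the passage from ``$\{g_i\}_{i\in S_j}$ is a Riesz basic sequence'' to ``$P_{S_j}$ maps $\H$ onto $\ell_2(S_j)$.'' This is where one must invoke Proposition \ref{FTP10} (or just observe directly) that the analysis operator of $\{g_i\}_{i\in S_j}$ equals the restriction $P_{S_j}$ of the analysis operator of the \emph{Parseval} frame $\{Pe_i\}$ composed with the harmless diagonal rescaling, so that surjectivity of the former is equivalent to $P_{S_j}|_{\H}$ being onto. Everything else is bookkeeping with frame bounds.
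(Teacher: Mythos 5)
Your proposal is correct and follows essentially the same route as the paper: pass to the Parseval frame $\{Pe_i\}_{i\in I}$ via Proposition \ref{CEP2}, normalize to the unit norm frame $\{Pe_i/\|Pe_i\|\}_{i\in I}$ with Bessel bound $1/A^2$, and invoke Proposition \ref{CEP1} to get the stated $r$. Your extra care in checking that the diagonal rescaling does not change which coordinate projections $P_{S_j}$ map $\H$ onto $\ell_2(S_j)$ (same kernel of the analysis operator, Lemma \ref{TFAL1}) is a point the paper leaves implicit, and you handle it correctly.
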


\begin{proof}

By Proposition \ref{CEP2}, if $P$ is the orthogonal projection of
$\ell_2(I)$ onto $H$, then $\|Pe_i\|\ge A$, for all $i\in I$.  Then
\[ \{f_i\}_{i\in I} = \left \{\frac{Pe_i}{\|Pe_i\|}\right \}_{i\in I},\]
is a unit norm frame with Bessel bound $1/A^2$.  So by
Proposition \ref{CEP1}, our subspace is $r$-decomposable for
\[ r = \left (\frac {6(A^2+1)}{\epsilon A^2}\right )^4,\]
for any $0<\epsilon <1$.   
\end{proof}

\section{Open Problems}

There are a number of important open problems which remain even after
the work of \cite{MSS}.

\begin{problem}
Can the $\eta$ and $\theta$ in Theorem \ref{MSS10} be improved?
\end{problem}

\begin{problem}
Can the values of $r$ in the various results be improved?
\end{problem}

It has been shown \cite{CCLV,CT} 
that every unit norm two tight frame can be partitioned into two
linearly independent sets.  But, there do not exist universal constants
$A,B$ so that all such frames can be partitioned into two subsets
each with Riesz bounds $A,B$ \cite{CEKP,CFMT1}.
This result raises a fundamental problem.

\begin{problem}
Can every unit norm two tight frame be partitioned into three subsets each
of which are Riesz basic sequences with Riesz bounds independent
of the dimension of the space?  
\end{problem}

Perhaps the most important open problem:

\begin{problem}
Find an implementable algorithm for proving the Paving Conjecture.
\end{problem}

The most important case is really finding an algorithm for
proving the Feichtinger Conjecture.  The Feichtinger Conjecture potentially
could have serious applications if we could quickly compute the appropriate
subsets which are Riesz basic sequences.

\section{Acknowledgement}\label{A}

For many years the Kadison-Singer Problem was a major motivating force for
many of us.  It's challenges made every day an exciting event.  It also brought
together mathematicians from many diverse areas of research - especially
as the "polynomial people" came in to give the solution.  
As we discovered more elementary formulations of the problem,
it became clear that this problem represented a fundamental idea for finite
dimensional Hilbert spaces which was not understood at all.  This just made the
problem even more interesting.  Almost everyone believed that the problem had
a negative answer - which probably contributed to the problem remaining open
for so long since we were only looking for a counter-example.  The solution to this
problem by Marcus/Spielman/Srivastave was a major achievement of our time
and earned them the {\bf Polya Prize}, a trip to the {\bf International Congress
of Mathematicians} and recognition yet to be established.

The 54 year search for a solution to the Kadison-Singer Problem represented a large number of
papers by many brilliant mathematicians which culminated in the solution to the problem by
Marcus/Spielman/Srivastave.  We enclose here a brief summary of the historical development
of the Kadison-Singer Problem from the direction of the Paving Conjecture.  Since the authors
are not experts in $C^*$-Algebras, we have chosen not to trace the history of the problem from
that direction.  So we will start in 1979 with the introduction of the {\bf Anderson Paving Conjecture}.
Also, there are hundreds of papers here so we will just consider those papers which introduced
new directions (equivalences) of the Paving Conjecture.
\begin{itemize}
\item \cite{KS} (1959)  Kadison and Singer formulate the {\bf Kadison-Singer Problem}.
\item \cite{A} (1979)  Anderson reformulates the Kadison-Singer Problem into the
{\bf Anderson Paving Conjecture}. This was significant because it changed the Kadison-Singer
Problem from being a specialized problem hidden in $C^*$-Algebras and opened it up to 
everyone in Analysis.

\item \cite{BHKW,BHKW2,HKW,HKW2} (1986) Berman/Halpern/Kaftal/Weiss make a deep study of the Paving Conjecture
for Laurant Operators.  They introduced the notion of {\bf uniform pavability} and they show that matrices with positive coefficients are {\bf pavable}.  They also give a positive solution for paving for the Schatten $C_p$-norms for $p=4,6$.

\item \cite{BT} (1989) Bourgain-Tzafriri prove the famous {\bf Restricted Invertibility Theorem} which 
naturally leads to the {\bf Bourgain-Tzafriri Conjecture}.

\item \cite{BT1} (1991)  Bourgain-Tzafriri show that matrices with small entries are {\bf pavable}.

\item \cite{AA} (1991)  Akemann and Anderson formulate the {\bf Akemann-Anderson Projection Paving Conjecture}
and show it implies a positive solution to the Kadison-Singer Problem.  This was important since it reduced
paving to paving for a much smaller class of operators - projections.

\item \cite{G}(2003) Grochenig shows that localized frames satisfy the {\bf Feichtinger Conjecture}.
This conjecture was formulated in \cite{CCLV} but appeared much later.

\item \cite{W,W2} (2003-2004)  Weaver formulates the {\bf Weaver Conjectures} and gives a counter-example
to a conjecture of Akemann and Anderson which would have implied a positive solution to the
Kadison-Singer Problem.

\item \cite{CCLV} (2005) Casazza/Christensen/Lindner/Vershynan introduce the \\{\bf Feichtinger Conjecture}
and show it is equivalent to the {\bf Bourgain-Tzafriri Conjecture}.

\item \cite{BS} (2006)  Bownik/Speegle make a detailed study of the Feichtinger Conjecture for Wavlet Frames,
Gabor Frames and Frames of Translates and relate the Feichtinger Conjecture to Gowers' work on a 
generalization of Van der Waerdan's Theorem.

\item \cite{CT,CFTW} (2006)  Casazza/Tremain and Casazza/Fickus/Tremain/Weber show that the Kadison-Singer
Problem is equivalent to the {\bf Feichtinger Conjecture}, the {\bf Bourgain-Tzafriri
Conjecture}, the
{\bf $R_{\epsilon}$-Conjecture}, the {\bf Casazza/Tremain Conjecture},
 and a number of conjectures in Time-Frequency Analysis, Frames
of Translates and Hilbert Space Frame Theory.

\item \cite{PR} (2008) Paulsen and Raghupathi show that {\bf paving} (respectively, paving Toeplitz operators)
is equivalent to paving upper trianguar matrices (respectively,
paving upper triangular Toeplitz operators).

\item \cite{CEKP} (2009)  Casazza/Edidin/Kalra/Paulsen show that paving projections with constant diagonal 1/2
is equivalent to the Paving Conjecture.  This is a new direction for paving projections as all
previous work involved paving projections with very small diagonals.
They also show that the Paving Conjecture fails for
2-paving.

\item \cite{La} (see also \cite{P}) (2010)  Lawton and Paulsen independently showed
 that if the Feichtinger Conjecture holds
for Fourier frames, then each set in the partition into Riesz basic sequences can be chosen
to be a {\bf syndetic set}.  Paulsen first presented this at GPOTS (2008).

\item (2007 - 2010) A large number of papers on the Feichtinger Conjecture appeared.  Too numerous
to list here.  See \cite{L} for a somewhat complete list - especially for reproducing kernel Hilbert spaces
and for classical spaces.

\item \cite{CFMT1} (2011) Casazza/Fickus/Mixon/Tremain give concrete constructions of non-2-pavable
projections.

\item \cite{Ca}  (2012) Casazza introduces the {\bf Sundberg Problem} which is implied by the Paving Conjecture.

\item \cite{MSS} (2013) Marcus/Spielman/Srivastava surprise the mathematical community by giving a positive
solution to the Kadison-Singer Problem.

\end{itemize}

\begin{remark}
We were recently made aware of the thesis \cite{l} of
Y. Lonke from 1993 which has a proof that BT is equivalent
to KS.  Since it was written Hebrew, it seems to have been
overlooked.  We now have English translations \cite{l}. 
\end{remark}

\end{document}